\newtheorem{theorem}{Theorem} [section]
\newtheorem{lemma}[theorem]{Lemma}
\newtheorem{proposition}[theorem]{Proposition}
\newtheorem{remark}[theorem]{Remark}
\newtheorem{corollary}[theorem]{Corollary}
\newtheorem{assumption}{Assumption}
\newcommand{\I}{\hspace{0.5mm}\text{I}\hspace{0.5mm}}
\newcommand{\II}{\text{I \hspace{-2.8mm} I} }
\numberwithin{equation}{section}
\numberwithin{theorem}{section}
\begin{document}
\baselineskip = 14pt

\title[Exact variations for nonlinear stochastic heat equation]
{Temporal quartic variation for nonlinear stochastic heat equations with piecewise constant coefficients}

\author[Y. Li, Y. Hu, L. Yan and H. Shu]
{Yongkang Li, Yaozhong Hu, Litan Yan$^*$ and Huisheng Shu}

\thanks{Y. Li is supported by the Fundamental Research Funds for the Central Universities of China (CUSF-DH-T-2024066). H. Shu is supported by the National Natural Science Foundation of China (No. 62073071). L. Yan is supported by the National Natural Science Foundation of China (No. 11971101) and the Natural Science Foundation of Shanghai Municipality (24ZR1402900).  Y. Hu is supported by   
 NSERC Discovery grant RGPIN
2024-05941  and a centennial  fund of University of Alberta.}

\address{Yongkang Li\\
	School of Mathematics and Statistics, Donghua University, 2999 North Renmin Rd., Songjiang, Shanghai 201620, P.R. China
	}
\address{
	Department of Mathematical and statistical sciences, University of Alberta, Edmonton, AB, T6G 2G1, Canada}
\email{yongkangli@mail.dhu.edu.cn}

\address{Yaozhong Hu\\
	Department of Mathematical and Statistical Sciences, University of Alberta, Edmonton, AB T6G 2G1, Canada}
\email{yaozhong@ualberta.ca}

\address{
	Huisheng Shu\\
	School of Mathematics and Statistics, Donghua University, 2999 North Renmin Rd., Songjiang, Shanghai 201620, P.R. China
}

\email{hsshu@dhu.edu.cn}

\address{
	Litan Yan\\
	School of Mathematics and Statistics, Donghua University, 2999 North Renmin Rd., Songjiang, Shanghai 201620, P.R. China
}

\email{litan-yan@hotmail.com}

\subjclass[2020]{60H15, 60G15, 60H05, 60H30, 35A08}

\keywords{Quartic variations; stochastic partial differential equations; mild solution; piecewise constant coefficients; space-time white noise.
}

\begin{abstract}  
We consider a stochastic partial differential equation with piecewise constant coefficients driven by multiplicative space-time white noise. We establish the existence and uniqueness of its mild solution in the sense of Walsh and investigate the asymptotic behavior of its temporal quartic variation. As an application, we construct a consistent estimator based on the resulting quartic variation formula.
\end{abstract}

\date{\today}
\maketitle
%

%

\baselineskip = 14pt

\section{Introduction}
Recently, Zili and Zougar \cite{MR3988829} introduced the following stochastic partial differential equation
 \begin{equation}\label{eq:main equation}
 \left\{
 \begin{aligned}
 &\frac{\partial}{\partial t}u(t,x)=\mathcal{L}\, u(t,x)+\sigma(u(t,x))\dot{W},\quad t\geq0,x\in\mathbb{R}\\
 &u(0,x)=0
 \end{aligned}
 \right.
 \end{equation}
where the operator $ \mathcal{L}$ is defined as
\begin{equation}\label{eq:L}
\mathcal{L}=\frac{1}{2\rho(x)}\frac{d}{dx}\left(\rho(x)A(x)\frac{d}{dx}\right),
\end{equation}
with
\begin{equation}\label{eq:A(x)}
A(x)=a_1\mathbbm{1}_{\{x\leq 0\}}+a_2\mathbbm{1}_{\{x>0\}}, \\
\rho(x)=\rho_1\mathbbm{1}_{\{x\leq 0\}}+\rho_2\mathbbm{1}_{\{x>0\}},
\end{equation}
and $ a_i, \rho_i (i=1,2)$ are positive constants. Also, the $ \dot{W} $ denotes the formal derivative of $W$. More precisely, $ W $ is a centered Gaussian stochastic field $ W=\{W(t, C); t\in[0, T], C\in \mathcal{B}_b(\mathbb{R})\} $ with covariance
\[\mathbb{E}[W(t, B)W(s, C)]=(t\wedge s)\lambda(B\cap C),\]
where $ \lambda $ is the Lebesgue measure and  $ \mathcal{B}_b(\mathbb{R}) $ is the set of $ \lambda- $bounded Borel subsets of $ \mathbb{R} $. Thus, \(W\) is interpreted as a space--time white noise in the
sense of Walsh's martingale-measure framework. We will assume that the coefficient $ \sigma:\mathbb{R}\rightarrow\mathbb{R} $ is Lipschitz continuous, i.e., there exists $M>0$ such that, for all \(x,y\in\mathbb R\),
\begin{equation}\label{eq:sigma}
|\sigma(x)-\sigma(y)|\leq M|x-y|.
\end{equation}
Additionally, the solution to equation (\ref{eq:main equation}) is a random field $ \{u(t,x), t\geq0, x\in\mathbb{R}\} $. An interesting fact is that in our case, the coefficient $ A $ in (\ref{eq:A(x)}) is a piecewise constant with one point of discontinuity which reflects the heterogeneity of the medium consisting of two kinds of materials in which the process under study propagates. For this reason, this equation characterizes the phenomenon of heat propagation within a medium composed of two distinct materials, subject to stochastic disturbances. Moreover, such a phenomenon is frequently observed in numerous practical scenarios; for example, see Cantrell and Cosner~\cite{CANTRELL1999189},  \'Etor\'e~\cite{MR2217816}, Gaveau \emph{et al.}~\cite{MR917463}, Lejay~\cite{MR2105066}, Nicaise~\cite{MR839024}. And this is one reason why we are particularly interested in this equation. From a mathematical perspective, heat propagation is represented by heat equations involving divergence-form elliptic operators with discontinuous coefficients. 

Equation (\ref{eq:main equation}) can be seen as a stochastic counterpart of the deterministic partial differential equation
\begin{equation}\label{eq:pde}
\frac{\partial u(t,x)}{\partial t}=\mathcal{L} u(t,x).
\end{equation}
An explicit expression of the fundamental solution of equation (\ref{eq:pde}) was obtained in Chen and Zili \cite{MR3296333}.

Equation (\ref{eq:main equation}) can also be viewed as a natural generalization of the stochastic heat equation driven by space-time white noise. This research has been extensively explored in a previous monograph (see, Tudor~\cite{MR4544909}). Usually, the operator $\mathcal{L}$ was taken as a Laplacian or fractional Laplacian operator. This provides additional motivation for examining the solution to this type of equation. 

Stochastic partial differential equations(SPDEs) associated with the operator $ \mathcal{L} $ defined in (\ref{eq:L}) have been investigated in Zili and Zougar  ~\cite{MR3988829,MR4421358,MR4028081,MR3992994}, and Mishura\emph{et al.}~\cite{MR4192903}. While many of these references concern the case of SPDEs with additive Gaussian noise, there exist few works which treat the case of a stochastic diffusion coefficient $ \sigma $ in (\ref{eq:main equation}). We may mention that another type of (\ref{eq:main equation}) involving $\mathcal{L} $ with $\sigma(u(t,x))$ formed as an affine function, i.e. $\sigma(x):=h_1x+h_2$, $h_1, h_2\in\mathbb{R}$, driven by an infinite-dimensional fractional Brownian motion has been studied in Mishura\emph{et al.}~\cite{MR4192903}. Among these works, M. Zili and E. Zougar~\cite{MR3992994} expanded the quartic variations in time and the quadratic variations in space of the solution to the equation (\ref{eq:main equation}) with $ \sigma\equiv 1$. They also deduced the estimators of $a_1$ and $a_2$ appearing in (\ref{eq:A(x)}). Zili and Zougar \cite{MR4028081} proved that the sequence of recentered and renormalized spatial quadratic variations of the solution to (\ref{eq:main equation}) satisfies an almost sure central limit theorem by using the Malliavin-Stein method. Besides, they gave some useful moment bound concerning the increment of the solution and the kernel. 

The exact \(p\)-variation has become an effective tool for parameter identification for SPDEs.  
\cite{MR2321899}  studied the exact quadratic variation in space and quartic variation in time for the one-dimensional stochastic heat equation driven by multiplicative space-time white noise. Subsequent works extended this framework to fractional stochastic heat equations with fractional Laplacian operators, corresponding to \(\mathcal L:=-(-\Delta)^{\alpha/2}, \alpha\in(1,2]\); see \cite{gamain2023exact,li2026exactspace,li2026exacttime,olivera2025temporal}. Exact spatial variation problems for stochastic heat equations with piecewise constant coefficients have also been investigated in \cite{li2025spatial}.
However, the temporal variations in this multiplicative setting have not yet been investigated.

In this work, we further develop the analysis of SPDEs of the form (\ref{eq:main equation}). We establish the temporal quartic variation with an explicit \(L^1\) convergence rate for the nonlinear divergence-form model considered here and discuss its application to parameter estimation. To establish the proofs, we employ various integration and analytical tools, making essential use of the explicit fundamental solution of (\ref{eq:main equation}) obtained by Chen and Zili~\cite{MR3296333}.  For the main techniques we refer to the similar arguments in Posp\'i\v{s}il and Tribe~\cite{MR2321899}.  

Our work is organized as follows: In the second section, we give some useful estimates for the fundamental solution of (\ref{eq:pde}) and prove the existence and uniqueness of the solution (\ref{eq:solution}) to the nonlinear stochastic partial differential equation (\ref{eq:main equation}). The last section is devoted to giving the explicit expression of the temporal quartic variations. Such an expansion allows us to derive an estimation procedure for the parameters $a_1$ and $a_2$.

\section{Preliminaries}
\subsection{The fundamental solution and Walsh stochastic integrals}
The solution to (\ref{eq:main equation}) is defined in the mild sense, that is, it can be written as a Dalang-Walsh integral concerning the noise $W$ by
\begin{equation}\label{eq:solution}
u(t,x)=\int_0^t\int_{\mathbb{R}}\sigma(u(s,y))G(t-s,x,y)\,W(ds,dy)\,, \quad t\in[0,T],\, x\in\mathbb{R}
\end{equation}
where \(G(t-s,x,y)\) denotes the fundamental solution of \eqref{eq:pde};
see \cite{arXiv:2502.15351} for its positivity. Its explicit expression is
given by
\begin{multline}\label{eq:G}
G(t-s,x,y)
 =   \Bigg[ \frac{1}{\sqrt{2 \pi (t-s)}}
 \left( \frac{ \mathbbm{1}_{ \{ y \le 0 \} } }{\sqrt{a_1}} + \frac{ \mathbbm{1}_{ \{y>0\} } }{\sqrt{a_2}} \right)  \left\{ \exp \left( - \frac{(f(x) - f(y))^2}
{2 (t-s)}\right) \right. \\
  +  \left.
\beta \, {\rm sign} (y)\, \exp
\left( - \frac{(\mid f(x) \mid  + \mid f(y) \mid)^2}{2(t-s)}\right) \right\} \Bigg] \mathbbm{1}_{\{s < t\}}\,,
\end{multline}
with \[f(y)= \frac{y}{\sqrt{a_1}}{\mathbbm 1}_{\{y\leq 0\}}+\frac{y}{\sqrt{a_2}} {\mathbbm 1}_{\{y>0\}}\,,
\quad
\alpha  = 1- \frac{\rho_1 a_1}{\rho_2a_2}\,,
\quad\text{and}\quad
\beta= \frac{\sqrt{a_1}+ \sqrt{a_2} (\alpha -1)}{\sqrt{a_1}-\sqrt{a_2} (\alpha -1)}   .\] 
For the proof, see e.g., Chen and Zili~\cite{MR3296333}.

We briefly recall the Walsh stochastic integral. For \(t\geq0\), let
\(\mathcal F_t\) denote the sigma-algebra generated by
\[
\{W(s,A):0\leq s\leq t,\ A\in\mathcal B_b(\mathbb R)\}.
\]
In what follows, all random fields are assumed to be jointly measurable and
adapted to \(\{\mathcal F_t\}_{t\geq0}\). When used as integrands in Walsh
stochastic integrals, they are understood to be predictable and to satisfy
\[
\mathbb E\left[
\int_0^\infty\int_{\mathbb R} X(s,y)^2\,dy\,ds
\right]<\infty.
\]
Moreover, we will need the following formula which follows from (\ref{eq:solution}) and the It\^o-type isometry associated with the Dalang-Walsh integral (the details can be seen, for example, in Dalang~\cite{MR1684157} and Walsh~\cite{MR876085} and this approach is well-suited to the study of trajectory space-time properties of the solution)
\[\mathbb{E}[u(t,x)\,u(s,y)]=\int_{0}^{s\wedge t}\int_{\mathbb{R}}G(t-r,x,z)\,G(s-r,y,z)\,\mathbb{E}[\sigma(u(r,z))^2]\,dz\,dr\,,\]
for any $s,t>0$ and $x,y\in\mathbb{R}$.

We shall repeatedly use the Burkholder--Davis--Gundy inequality for Walsh
stochastic integrals: for every random field \(X\) satisfying
the usual integrability condition and for every \(p\ge2\),
\[
\left\|
\int_0^t\int_{\mathbb R}X(r,z)\,W(dr,dz)
\right\|_p^2
\le
C_p
\int_0^t\int_{\mathbb R}
\|X(r,z)\|_p^2\,dz\,dr .
\]
In the case \(p=2\), we use the It\^o isometry.
\subsection{Kernel estimates}
To address our issue, we will need some properties with respect to the fundamental solution $G(t-s,x,y)$.  For simplicity, throughout the paper, \(C\) denotes a finite positive constant whose
value may change from line to line. It may depend on the fixed model
parameters \(a_1,a_2,\rho_1,\rho_2\), as well as on the Lipschitz
constant \(M\) of \(\sigma\) and on \(|\sigma(0)|\). Additional
dependence on \(p,T,\varepsilon\), or a fixed spatial point \(x\), is
indicated by subscripts. Unless otherwise stated, the constants are
independent of \(n,i,j,\delta\) and of the free space--time variables.
\begin{lemma}\label{le:Gleq}
For every $0\leq s<t\leq T$ and $x, y\in\mathbb{R}$, we have
\[|G(t-s,x,y)|\leq C\frac{1}{\sqrt{t-s}}\exp\left(-\frac{(f(x)-f(y))^2}{2(t-s)}\right)\] 
with $C=\frac{1+|\beta|}{\sqrt{2\pi}}(\frac{1}{\sqrt{a_1}}+\frac{1}{\sqrt{a_2}})$.
\end{lemma}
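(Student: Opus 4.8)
The plan is to bound $|G_{t-s}(x-y)|$ directly from its explicit formula, controlling each factor separately and, crucially, dominating the second (reflected) exponential by the first (diagonal) one so that the sharp Gaussian weight $\exp(-(f(x)-f(y))^2/2(t-s))$ is preserved. First I would factor out $\mathbbm{1}_{\{s<t\}}/\sqrt{2\pi(t-s)}$ and apply the triangle inequality to the bracketed sum, so that $|G_{t-s}(x-y)|$ is at most $\frac{1}{\sqrt{2\pi(t-s)}}$ times the sum of the moduli of the two exponential terms, each carrying its coefficient.

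Next I would bound the algebraic prefactor $\frac{\mathbbm{1}_{\{y\le 0\}}}{\sqrt{a_1}}+\frac{\mathbbm{1}_{\{y>0\}}}{\sqrt{a_2}}$: since the two indicators are mutually exclusive, for every $y$ it equals either $1/\sqrt{a_1}$ or $1/\sqrt{a_2}$, hence is at most $\frac{1}{\sqrt{a_1}}+\frac{1}{\sqrt{a_2}}$. Likewise $|\beta\,\mathrm{sign}(y)|\le |\beta|$, so the diagonal exponential term contributes a coefficient $\le \frac{1}{\sqrt{a_1}}+\frac{1}{\sqrt{a_2}}$ and the reflected one a coefficient $\le |\beta|\big(\frac{1}{\sqrt{a_1}}+\frac{1}{\sqrt{a_2}}\big)$.

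The key step is the comparison of the two Gaussian kernels. I would use the elementary identity
\[
(|f(x)|+|f(y)|)^2-(f(x)-f(y))^2 = 2\big(|f(x)f(y)|+f(x)f(y)\big)\ge 0,
\]
valid because $|f(x)f(y)|\ge -f(x)f(y)$. Hence $(|f(x)|+|f(y)|)^2\ge (f(x)-f(y))^2$, and since $t-s>0$,
\[
\exp\!\left(-\frac{(|f(x)|+|f(y)|)^2}{2(t-s)}\right)\le \exp\!\left(-\frac{(f(x)-f(y))^2}{2(t-s)}\right),
\]
which lets me replace the reflected exponential by the diagonal one at the cost only of the factor $|\beta|$.

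Collecting these bounds yields
\[
|G_{t-s}(x-y)|\le \frac{1+|\beta|}{\sqrt{2\pi}}\left(\frac{1}{\sqrt{a_1}}+\frac{1}{\sqrt{a_2}}\right)\frac{1}{\sqrt{t-s}}\exp\!\left(-\frac{(f(x)-f(y))^2}{2(t-s)}\right),
\]
which is exactly the claimed inequality with $C_{a_1,a_2}=\frac{1+|\beta|}{\sqrt{2\pi}}\big(\frac{1}{\sqrt{a_1}}+\frac{1}{\sqrt{a_2}}\big)$. No step is genuinely difficult; the only point demanding care is the comparison of the two exponentials, since a careless estimate would discard the sharp Gaussian factor that is needed for the variation computations later.
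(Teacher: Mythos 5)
Your proof is correct, and every step checks out: the triangle inequality applied after factoring out $\frac{1}{\sqrt{2\pi(t-s)}}$, the observation that the prefactor equals either $\frac{1}{\sqrt{a_1}}$ or $\frac{1}{\sqrt{a_2}}$ (hence is at most their sum), the bound $|\beta\,\mathrm{sign}(y)|\le|\beta|$, and crucially the identity $(|f(x)|+|f(y)|)^2-(f(x)-f(y))^2=2\big(|f(x)f(y)|+f(x)f(y)\big)\ge 0$, which lets you dominate the reflected Gaussian kernel by the diagonal one and so preserve the sharp weight $\exp\big(-\tfrac{(f(x)-f(y))^2}{2(t-s)}\big)$; the constant you collect matches the stated $C_{a_1,a_2}$ exactly. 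Note, however, that the paper does not prove this lemma at all: its ``proof'' is a citation to Mishura \emph{et al.}~\cite{MR4192903}, so there is no internal argument to compare against. Your contribution is therefore a self-contained verification of what the paper outsources to the literature, and it is presumably the same elementary computation carried out in the cited reference, since the explicit formula for $G$ admits essentially only this line of attack. What your version buys is that the estimate can be read and checked without consulting the external source; what the paper's citation buys is brevity and an implicit pointer to the companion results (Lemma~\ref{le:intGleq} and related bounds) proved in the same place.
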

\begin{proof}
See Mishura~\emph{et al.} \cite{MR4192903}.
\end{proof}
\begin{lemma}\label{le:intGleq}
For every $0\leq s<t\leq T$ and $x, y\in\mathbb{R}$, we have
\[\max \left\{ \int_{\mathbb{R}} |G(t-s,x,z)|\,dz, \int_{\mathbb{R}} |G(t-s,y,z)|\,dz \right\} \le C,\]
where $C=(\frac{1}{\sqrt{a_1}}+\frac{1}{\sqrt{a_2}})(1+|\beta|)\max(\sqrt{a_1},\sqrt{a_2}).$
\end{lemma}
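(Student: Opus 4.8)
The plan is to start from the pointwise Gaussian bound of Lemma \ref{le:Gleq} and to reduce each of the two integrals to a standard Gaussian integral by the change of variables $u=f(y)$. First I would record the structural fact that underlies everything: the map $f$ is a continuous, strictly increasing bijection of $\mathbb{R}$ onto itself, equal to $y/\sqrt{a_1}$ on $\{y\le 0\}$ and to $y/\sqrt{a_2}$ on $\{y>0\}$, hence piecewise linear with a single kink at the origin. (Note that the symbol $C_{a_1,a_2}$ in the two lemmas denotes two different constants; only the statement of the present lemma is relevant here.)

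For the first integral, Lemma \ref{le:Gleq} gives
\[\int_{\mathbb{R}}|G_{t-s}(x-y)|\,dy \le \frac{1+|\beta|}{\sqrt{2\pi}}\left(\frac{1}{\sqrt{a_1}}+\frac{1}{\sqrt{a_2}}\right)\frac{1}{\sqrt{t-s}}\int_{\mathbb{R}}\exp\left(-\frac{(f(x)-f(y))^2}{2(t-s)}\right)dy.\]
I would then split the $y$-integral at the origin and substitute $u=f(y)$ on each piece. Because $dy=\sqrt{a_1}\,du$ on $(-\infty,0]$ and $dy=\sqrt{a_2}\,du$ on $(0,\infty)$, each Jacobian is bounded by $\max(\sqrt{a_1},\sqrt{a_2})$, while the two image intervals reassemble, by bijectivity of $f$, into all of $\mathbb{R}$. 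Pulling out this maximal factor and invoking the elementary normalization $\int_{\mathbb{R}}\exp(-(f(x)-u)^2/(2(t-s)))\,du=\sqrt{2\pi(t-s)}$ collapses the bound to $\frac{1+|\beta|}{\sqrt{2\pi}}(\frac{1}{\sqrt{a_1}}+\frac{1}{\sqrt{a_2}})\max(\sqrt{a_1},\sqrt{a_2})\sqrt{2\pi}$, in which the factors $1/\sqrt{t-s}$ and $\sqrt{t-s}$ cancel and the two copies of $\sqrt{2\pi}$ cancel, leaving exactly $C_{a_1,a_2}$.

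For the second integral $\int_{\mathbb{R}}|G_{t-s}(y-z)|\,dy$ the variable of integration now occupies the first spatial slot, but after applying Lemma \ref{le:Gleq} the integrand is again dominated by $\exp(-(f(y)-f(z))^2/(2(t-s)))$ with $f(z)$ frozen, so the identical substitution $u=f(y)$ yields the same estimate. Taking the maximum of the two coincident bounds then completes the proof.

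The only point requiring care — and thus the "main obstacle", though a mild one — is the piecewise nature of the substitution: since $f$ has two different slopes on the two half-lines, one must split the domain and carry the two Jacobians $\sqrt{a_1}$ and $\sqrt{a_2}$ separately before bounding each by their maximum, rather than performing a single global change of variables. Once this bookkeeping is done and the surjectivity of $f$ is used to recover the full line as the image, what remains is merely the normalization of a Gaussian.
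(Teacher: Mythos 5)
Your proof is correct, and it checks out quantitatively: the factor $\frac{1+|\beta|}{\sqrt{2\pi}}\bigl(\frac{1}{\sqrt{a_1}}+\frac{1}{\sqrt{a_2}}\bigr)\frac{1}{\sqrt{t-s}}$ from Lemma \ref{le:Gleq}, multiplied by the Gaussian mass $\max(\sqrt{a_1},\sqrt{a_2})\sqrt{2\pi(t-s)}$ obtained after the piecewise substitution $u=f(y)$, reproduces exactly the constant $(\frac{1}{\sqrt{a_1}}+\frac{1}{\sqrt{a_2}})(1+|\beta|)\max(\sqrt{a_1},\sqrt{a_2})$ stated in the lemma. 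Note, however, that the paper itself does not prove this lemma at all: its ``proof'' is a citation to Mishura \emph{et al.}, so there is no internal argument to compare against. Your argument is the natural self-contained one, and it is in fact the same technique the paper does spell out for Lemma \ref{le:quasi-semigroup} (split the integral at the origin where $f$ changes slope, substitute on each half-line, bound the two Jacobians $\sqrt{a_1},\sqrt{a_2}$ by their maximum, and finish with a Gaussian normalization --- there via Parseval because the kernel is squared, here directly). Your handling of the two integrals is also right: since $G$ is not translation invariant, $G_{t-s}(x-y)$ is really a kernel in the pair $(x,y)$, and the bound of Lemma \ref{le:Gleq} is symmetric enough in the two slots that integrating over either one reduces to the identical computation. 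The two remarks you flag --- that $f$ is an increasing piecewise-linear bijection so the two image half-lines reassemble into $\mathbb{R}$, and that the symbol $C_{a_1,a_2}$ denotes different constants in Lemmas \ref{le:Gleq} and \ref{le:intGleq} --- are both correct and worth making explicit, since the paper glosses over them.
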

\begin{proof}
See Mishura~\emph{et al.} \cite{MR4192903}.
\end{proof}

\begin{lemma}\label{le:quasi-semigroup}
For every $0\leq s<t\leq T$ and $x\in\mathbb{R}$, there exists a constant \(C>0\) such that
\[\int_{\mathbb{R}}G(t-s,x,y)^2dy\leq C\frac{1}{2\sqrt{\pi}}\frac{1}{\sqrt{t-s}}.\]
\end{lemma}
\begin{proof}
According to Lemma \ref{le:Gleq}, we have
\begin{multline*}
\int_{\mathbb{R}}G(t-s,x,y)^2dy\leq C^2\int_0^{+\infty}\left| \frac{1}{\sqrt{2\pi(t-s)}}
\exp\left(-\frac{\left(f(x)-\frac{y}{\sqrt{a_2}}\right)^2}{2(t-s)}\right) \right|^2dy\\
+C^2\int_{-\infty}^0\left| \frac{1}{\sqrt{2\pi(t-s)}}
\exp\left(-\frac{\left(f(x)-\frac{y}{\sqrt{a_1}}\right)^2}{2(t-s)}\right) \right|^2dy
\end{multline*}
By changing the variables
\[y_1=f(x)-\frac{y}{\sqrt{a_2}},\, y_2=f(x)-\frac{y}{\sqrt{a_1}},\]
we obtain
\begin{align*}
\int_{\mathbb{R}}G(t-s,x,y)^2dy&\leq C^2\sqrt{a_2}\int_{-\infty}^{f(x)}\left| \frac{1}{\sqrt{2\pi(t-s)}}\exp\left(-\frac{y^2}{2(t-s)}\right) \right|^2dy\\
&\qquad\qquad+C^2\sqrt{a_1}\int_{f(x)}^{+\infty}\left| \frac{1}{\sqrt{2\pi(t-s)}}\exp\left(-\frac{y^2}{2(t-s)}\right) \right|^2dy\\
&\leq (\sqrt{a_1}\vee\sqrt{a_2}) C^2\int_\mathbb{R}\left| \frac{1}{\sqrt{2\pi(t-s)}}\exp\left(-\frac{y^2}{2(t-s)}\right) \right|^2dy\\
&:=C^2\int_{\mathbb{R}}p_{t-s}^2(y)dy\,,
\end{align*}
where $p_{t}(y)=(2\pi t)^{-1/2}\exp(-y^2/2t)$ denotes the fundamental solution associated to the standard heat equation.
Then we apply the Parseval-Plancherel identity by recalling that the Fourier transform of $p_t(\cdot)$ is
\[\mathcal{F}p_t(\cdot)(\xi)=e^{-\frac{t\xi^2}{2}}\,,\quad\xi\in\mathbb{R}\,,\,t>0.\]
Hence, it follows that
\begin{equation*}
\int_{\mathbb{R}}G(t-s,x,y)^2dy\leq C^2\int_{\mathbb{R}}\frac{1}{2\pi}e^{-(t-s)\xi^2}d\xi=C^2\frac{1}{2\sqrt{\pi}}\frac{1}{\sqrt{t-s}}.
\end{equation*}

\end{proof}

By using similar arguments from Zili and Zougar~\cite{MR4028081}, we get the following estimate. The proof is postponed to Appendix \ref{appendix}.
\begin{lemma}\label{le:G-G2}
	Let $G(t,x,y)$ be the heat kernel defined by (\ref{eq:G}). Then for every $s,t\in[0,T], x\in\mathbb{R}$ with $s< t$, and for every $h>0$, we have
	\begin{equation}
		\int_0^s\int_{\mathbb{R}}\left(G(t+h-r,x,y)-G(t-r,x,y)\right)^2dy\,dr\leq C (1+\beta^2)h^2(t-s)^{-\frac{3}{2}},
	\end{equation}
	where $C=\frac{\Gamma(3/2)}{2\pi}\max\left(\frac{\sqrt{a_2}}{a_1}, \frac{\sqrt{a_1}}{a_2}\right)$.
\end{lemma}

This estimate is consistent with the corresponding bound for the classical
stochastic heat equation obtained by Posp\'i\v{s}il and Tribe~\cite{MR2321899}.
\subsection{Existence, uniqueness and moment bounds}
The following proposition deals with the uniqueness and existence of the mild solution to equation (\ref{eq:main equation}), together with its uniform moment bounds.
\begin{proposition}\label{pro:exist}
Let
\((\Omega,\mathcal F,\{\mathcal F_t\}_{t\geq0},\mathbb P)\)
be a filtered probability space satisfying the usual conditions.
Assume that \(\sigma\) is globally Lipschitz. Then equation
\eqref{eq:main equation} admits a unique mild solution
$\{u(t,x):(t,x)\in[0,T]\times\mathbb R\}$, up to modification, satisfying, for every \(p\geq2\),
\begin{equation}\label{eq:exist}
	\sup_{(t,x)\in[0,T]\times\mathbb R}
	\mathbb E|u(t,x)|^p<\infty.
\end{equation}
\end{proposition}
The proof of the existence can be done by using the Picard iteration scheme, and for this proof, we need the following useful result. The proof can be dealt with in a similar way as in the proof of Theorem 6.4 in Dalang \emph{et al.}~\cite{MR1500166}.
\begin{lemma}[Gronwall's lemma]\label{le:gronwall}
Suppose $ \phi_1, \phi_2,\dots :[0, T]\rightarrow\mathbb{R}_+ $ are measurable and non-decreasing. Suppose also that there exists a constant $A$ such that for all integers $n\geq 1$, and all $t\in[0, T]$, we have
\begin{equation}
\phi_{n+1}(t)\leq A\int_0^t\phi_n(s)ds.
\end{equation}
Then,
\begin{equation}
\phi_n(t)\leq \phi_1(T)\frac{(At)^{n-1}}{(n-1)!}
\end{equation}
holds for $n\geq 1$ and $t\in[0, T]$.
\end{lemma}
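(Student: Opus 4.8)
The plan is to argue by induction on $n$, with the monotonicity hypothesis entering only at the base case and the recursive integral bound doing all the work in the inductive step. First I would note that the measurability and non-negativity of each $\phi_n$ guarantee that the integrals $\int_0^t\phi_n(s)\,ds$ are well defined and non-negative, so that every inequality encountered is a comparison of finite non-negative quantities and no integrability issue arises. For the base case $n=1$, the asserted bound reads $\phi_1(t)\le \phi_1(T)\frac{(At)^0}{0!}=\phi_1(T)$, which is immediate from the fact that $\phi_1$ is non-decreasing and $t\le T$.

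For the inductive step, I would assume the bound $\phi_n(t)\le \phi_1(T)\frac{(At)^{n-1}}{(n-1)!}$ holds for all $t\in[0,T]$ and insert it into the standing hypothesis $\phi_{n+1}(t)\le A\int_0^t\phi_n(s)\,ds$. Since the integrand is bounded pointwise by the inductive estimate, monotonicity of the integral gives
\[
\phi_{n+1}(t)\le A\int_0^t \phi_1(T)\frac{(As)^{n-1}}{(n-1)!}\,ds
= \phi_1(T)\,\frac{A^{n}}{(n-1)!}\int_0^t s^{n-1}\,ds.
\]
Evaluating the elementary integral $\int_0^t s^{n-1}\,ds = t^{n}/n$ and using $n\cdot(n-1)!=n!$ then yields
\[
\phi_{n+1}(t)\le \phi_1(T)\,\frac{A^{n}t^{n}}{n!}=\phi_1(T)\,\frac{(At)^{n}}{n!},
\]
which is exactly the claimed bound at index $n+1$. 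This closes the induction and establishes the stated inequality for every $n\ge 1$ and $t\in[0,T]$.

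There is no genuinely hard step here; the argument is a routine induction. The only points worth flagging are purely organizational: the power $A^{n}$ and the factorial $(n-1)!\to n!$ must be tracked carefully, and one should observe that the monotonicity of the $\phi_n$ is used \emph{only} to compare $\phi_1(t)$ with $\phi_1(T)$ in the base case, whereas the inductive step relies solely on the recursive integral inequality and the non-negativity of the $\phi_n$.
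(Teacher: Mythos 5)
Your proof is correct. The paper itself does not spell out an argument---it defers to Theorem 6.4 in Dalang \emph{et al.}~\cite{MR1500166}, where the same bound is obtained by exactly this mechanism (iterating the integral inequality down to $\phi_1$, bounding $\phi_1(s)\leq\phi_1(T)$ by monotonicity, and producing the factor $t^{n-1}/(n-1)!$ from the iterated integral), so your induction is essentially the standard proof being cited, with the bookkeeping of $A^{n}$ and $(n-1)!\to n!$ handled correctly.
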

\begin{remark}
As a consequence of Lemma \ref{le:gronwall}, any positive power of $\phi_n(t)$ is summable in $n$. Also, if $ \phi_n$ does not depend on $n$, then it follows that $\phi_n\equiv0$.
\end{remark}

\begin{proof}[Proof of Proposition \ref{pro:exist}]
The proof will be done in two steps.\\ 
We will repeatedly use the following elementary observation. If
\(F:[0,T]\to\mathbb R_+\) is non-decreasing, then, for every
\(0\leq r\leq t\leq T\),
\begin{align*}
	\int_0^r\frac{F(s)}{(r-s)^{1/2}}\,ds
	&=
	\int_0^r\frac{F(r-v)}{v^{1/2}}\,dv\\
	&\leq
	\int_0^t\frac{F(t-v)}{v^{1/2}}\,dv\\
	&=
	\int_0^t\frac{F(s)}{(t-s)^{1/2}}\,ds.
\end{align*}
\textit{Step \I: Uniqueness.}\qquad Suppose $u(t,x)$ and $v(t,x)$ both satisfy the definition of mild solution with the condition (\ref{eq:exist}). We wish to prove that $ u(t,x)$ and $v(t,x)$ are modifications of one another. We set $d(t,x):=u(t,x)-v(t,x)$, i.e., 
\[d(t,x)=\int_0^t\int_{\mathbb{R}}(\sigma(u(s,y))-\sigma(v(s,y)))G(t-s,x,y)\,W(ds,dy).\]
By the It\^o isometry and the Lipschitz continuity of \(\sigma\),
\begin{align*}
	\mathbb E|d(t,x)|^2
	&=
	\int_0^t\int_{\mathbb R}
	\mathbb E\left|
	\sigma(u(s,y))-\sigma(v(s,y))
	\right|^2
	G(t-s,x,y)^2\,dy\,ds\\
	&\leq
	M^2
	\int_0^t\int_{\mathbb R}
	\mathbb E|d(s,y)|^2
	G(t-s,x,y)^2\,dy\,ds.
\end{align*}
Define
\[
H(t)
:=
\sup_{\substack{r\in[0,t]\\x\in\mathbb R}}
\mathbb E|d(r,x)|^2.
\]
Then \(H\) is non-negative and non-decreasing. For every
\(r\in[0,t]\), Lemma~\ref{le:quasi-semigroup} yields
\begin{align*}
	\sup_{x\in\mathbb R}\mathbb E|d(r,x)|^2
	&\leq
	C
	\int_0^r
	\frac{H(s)}{(r-s)^{1/2}}\,ds.
\end{align*}
Taking the supremum over \(r\in[0,t]\) and using the preceding
elementary observation, we obtain
\[
H(t)
\leq
C
\int_0^t
\frac{H(s)}{(t-s)^{1/2}}\,ds.
\]
Choose \(q_1\in(1,2)\), and let \(q_2>2\) be its conjugate exponent.
By H\"older's inequality,
\begin{align*}
	H(t)
	&\leq
	C
	\left(
	\int_0^t(t-s)^{-q_1/2}\,ds
	\right)^{1/q_1}
	\left(
	\int_0^tH(s)^{q_2}\,ds
	\right)^{1/q_2}\\
	&\leq
	C_T
	\left(
	\int_0^tH(s)^{q_2}\,ds
	\right)^{1/q_2},
\end{align*}
where the first integral is finite because \(q_1<2\).
Consequently,
\[
H(t)^{q_2}
\leq
C
\int_0^tH(s)^{q_2}\,ds.
\]
The Gronwall lemma implies that
\[
H(t)=0,
\qquad t\in[0,T].
\]
Hence, \(u\) and \(v\) are modifications of one another.

\textit{Step \II: Existence.}\qquad 
Fix \(p\geq2\). Since \(\sigma\) is globally Lipschitz, it satisfies the
linear growth condition
\[
|\sigma(z)|
\leq
|\sigma(0)|+M|z|,
\qquad z\in\mathbb R.
\]
 Let
 \[
 u_0(t,x):=0
 \]
 and define recursively
 \[
 u_{n+1}(t,x)
 =
 \int_0^t\int_{\mathbb R}
 \sigma(u_n(s,y))G(t-s,x,y)\,W(ds,dy),
 \qquad n\geq0.
 \]
 We first establish uniform \(p\)-th moment bounds for the Picard
 iterations. Define
 \[
 K_{n,p}(t)
 :=
 \sup_{\substack{r\in[0,t]\\x\in\mathbb R}}
 \|u_n(r,x)\|_p^2.
 \]
 By the Burkholder--Davis--Gundy inequality for Walsh stochastic integrals, Minkowski's inequality,
 the linear growth of \(\sigma\), Lemma~\ref{le:quasi-semigroup}, and
 the same argument as in Step~\I\, based on the preceding elementary
 observation, we obtain
 \begin{align*}
 	K_{n+1,p}(t)
 	&\leq
 	C_p
 	\sup_{\substack{r\in[0,t]\\x\in\mathbb R}}
 	\int_0^r\int_{\mathbb R}
 	\|\sigma(u_n(s,y))\|_p^2
 	G(r-s,x,y)^2\,dy\,ds\\
 	&\leq
 	C_{p}
 	\int_0^t
 	\frac{1+K_{n,p}(s)}{(t-s)^{1/2}}\,ds.
 \end{align*}
 Using H\"older's inequality with the conjugate exponents
 \(q_1\in(1,2)\) and \(q_2>2\), we obtain
 \[
 K_{n+1,p}(t)^{q_2}
 \leq
 C_{p,T}
 \int_0^t
 \left(1+K_{n,p}(s)^{q_2}\right)\,ds.
 \]
 For \(N\geq1\), set
 \[
 \overline K_{N,p}(t)
 :=
 \max_{0\leq n\leq N}K_{n,p}(t)^{q_2}.
 \]
 Then
 \[
 \overline K_{N,p}(t)
 \leq
 C_{p,T}
 +
 C_{p,T}
 \int_0^t\overline K_{N,p}(s)\,ds.
 \]
 By Gronwall's inequality,
 \[
 \sup_{N\geq1}\sup_{t\in[0,T]}
 \overline K_{N,p}(t)
 <\infty.
 \]
 Therefore,
 \begin{equation}\label{eq:picard-uniform-p}
 	\sup_{n\geq0}
 	\sup_{(t,x)\in[0,T]\times\mathbb R}
 	\mathbb E|u_n(t,x)|^p
 	<\infty.
 \end{equation}
 
 It remains to prove that the Picard sequence is Cauchy.
Set
\[
d_n(t,x):=u_{n+1}(t,x)-u_n(t,x),
\qquad n\geq0.
\]
Then, for every \(n\geq 1\),
\[
d_n(t,x)
=
\int_0^t\int_{\mathbb R}
\left(
\sigma(u_n(s,y))-\sigma(u_{n-1}(s,y))
\right)
G(t-s,x,y)\,W(ds,dy).
\]
For every \(n\geq0\), define
\[
D_{n,p}(t)
:=
\sup_{\substack{r\in[0,t]\\x\in\mathbb R}}
\|d_n(r,x)\|_p^2.
\]
By the Burkholder--Davis--Gundy inequality for Walsh stochastic integrals, the Lipschitz continuity
of \(\sigma\), Lemma~\ref{le:quasi-semigroup}, and the same argument
as in Step~\I, we have, for every \(n\geq0\)
\[
D_{n+1,p}(t)
\leq
C_p
\int_0^t
\frac{D_{n,p}(s)}{(t-s)^{1/2}}\,ds.
\]
Applying H\"older's inequality gives
\[
D_{n+1,p}(t)^{q_2}
\leq
C_{p,T}
\int_0^tD_{n,p}(s)^{q_2}\,ds.
\]
Set
\[
\phi_n(t):=D_{n,p}(t)^{q_2}.
\]
Then
\[
\phi_n(t)
\leq
C_{p,T}
\int_0^t\phi_{n-1}(s)\,ds.
\]
By Lemma~\ref{le:gronwall},
\[
\phi_n(t)
\leq
\phi_1(T)
\frac{
	\left(C_{p,T}t\right)^{n-1}
}{
	(n-1)!
}.
\]
Consequently,
\[
\sum_{n=1}^{\infty}
\sup_{(t,x)\in[0,T]\times\mathbb R}
\|d_n(t,x)\|_p
=
\sum_{n=1}^{\infty}
D_{n,p}(T)^{1/2}
=
\sum_{n=1}^{\infty}
\phi_n(T)^{1/(2q_2)}
<\infty.
\]
Thus \(\{u_n\}_{n\geq0}\) is a Cauchy sequence in \(L^p(\Omega)\),
uniformly on \([0,T]\times\mathbb R\). Let \(u\) denote its limit.
Since \(p\geq2\), for every \(R>0\),
\[
\mathbb E\int_0^T\int_{-R}^R
|u_n(t,x)-u(t,x)|^2\,dx\,dt
\leq
2RT
\sup_{(t,x)\in[0,T]\times\mathbb R}
\mathbb E|u_n(t,x)-u(t,x)|^2
\longrightarrow0.
\]
Since each \(u_n\) is predictable and the predictable subspace is
closed in the corresponding local \(L^2\)-space, a standard
localization argument yields a predictable version of \(u\) on
\([0,T]\times\mathbb R\), which we use throughout.

Moreover, the Lipschitz continuity of \(\sigma\), the
Burkholder--Davis--Gundy inequality for Walsh stochastic integrals, and the convergence of \(u_n\) show
that
\[
\int_0^t\int_{\mathbb R}
\sigma(u_n(s,y))G(t-s,x,y)\,W(ds,dy)
\longrightarrow
\int_0^t\int_{\mathbb R}
\sigma(u(s,y))G(t-s,x,y)\,W(ds,dy)
\]
in \(L^p(\Omega)\). Therefore, \(u\) satisfies the integral equation \eqref{eq:solution}.

Finally, by \eqref{eq:picard-uniform-p} and Fatou's lemma,
\[
\sup_{(t,x)\in[0,T]\times\mathbb R}
\mathbb E|u(t,x)|^p
<\infty.
\]
Since \(p\geq2\) was arbitrary, \eqref{eq:exist} holds for every
\(p\geq2\).
\end{proof}

Zili and Zougar~\cite{MR3988829} proved the existence of the solution $\{u(t,x), t\geq0, x\in\mathbb{R}\}$ of the equation (\ref{eq:main equation}) when $\sigma\equiv1$. Also, they proved that the corresponding mild solution has a quasi-helix property, from which they deduced that its sample paths $t\longmapsto u(t,x)$ are H\"older continuous, but not differentiable.  In the present work, the coefficient \(\sigma\) is allowed to be a globally
Lipschitz function of the solution. Therefore, the increment estimates below
are derived directly for the nonlinear mild solution.	
\subsection{Increment estimates}
This subsection is devoted to moment estimates for the temporal and spatial
increments of the mild solution. The upper bounds follow from the uniform moment
estimate, the Burkholder--Davis--Gundy inequality for Walsh stochastic
integrals, and the kernel estimates in the previous subsection. Under an
additional non-degeneracy condition on \(\sigma\), we also obtain lower bounds,
which show that the nonlinear solution has a quasi-helix type local behavior.
\begin{lemma}\label{le:helix-t}
		Let \(p\geq 2\). For every \(x\in\mathbb R\), the solution
		given by \eqref{eq:solution} satisfies
		\begin{equation}\label{eq:temporal-regularity-p}
			\mathbb E|u(t,x)-u(s,x)|^p
			\leq C_{p,T}|t-s|^{\frac{p}{4}},
		\end{equation}
		for all \(s,t\in[0,T]\), where \(C_{p,T}>0\) is a constant
		independent of \(s,t\), and \(x\).
	\end{lemma}
	
	\begin{proof}
		Fix \(x\in\mathbb R\). Without loss of generality, assume that
		\(0\leq s<t\leq T\). By \eqref{eq:solution},
		we have
		\begin{align*}
			u(t,x)-u(s,x)
			={}&
			\int_0^s\int_{\mathbb R}
			\sigma(u(r,z))
			\bigl(G(t-r,x,z)-G(s-r,x,z)\bigr)
			W(dr,dz)\\
			&+
			\int_s^t\int_{\mathbb R}
			\sigma(u(r,z))G(t-r,x,z)
			W(dr,dz).
		\end{align*}
		Set
		\begin{align*}
			\Lambda_1
			&:=
			\int_0^s\int_{\mathbb R}
			\sigma(u(r,z))
			\bigl(G(t-r,x,z)-G(s-r,x,z)\bigr)
			W(dr,dz),\\
			\Lambda_2
			&:=
			\int_s^t\int_{\mathbb R}
			\sigma(u(r,z))G(t-r,x,z)
			W(dr,dz).
		\end{align*}
		Then
		\[
		u(t,x)-u(s,x)=\Lambda_1+\Lambda_2.
		\]
		
		By the Burkholder--Davis--Gundy inequality for Walsh stochastic integrals,
		together with Minkowski's  inequality, we have
		\begin{align}
			\|\Lambda_1\|_p^2
			&\leq
			C_p
			\left\|
			\int_0^s\int_{\mathbb R}
			|\sigma(u(r,z))|^2
			|G(t-r,x,z)-G(s-r,x,z)|^2
			\,dz\,dr
			\right\|_{\frac p2}
			\notag\\
			&\leq
			C_p
			\int_0^s\int_{\mathbb R}
			\|\sigma(u(r,z))\|_p^2
			|G(t-r,x,z)-G(s-r,x,z)|^2
			\,dz\,dr.
			\label{eq:I1-p-estimate}
		\end{align}
		Since the solution \(u\) is uniformly \(L^p\)-bounded on
		\([0,T]\times\mathbb R\), condition \eqref{eq:sigma} implies that
		\[
		\sup_{(r,z)\in[0,T]\times\mathbb R}
		\|\sigma(u(r,z))\|_p<\infty.
		\]
		Consequently, it remains to estimate
		\[
		\int_0^s\int_{\mathbb R}
		|G(t-r,x,z)-G(s-r,x,z)|^2
		\,dz\,dr.
		\]
		Set \(h:=t-s\). If \(0<h<s\), we split the integral as
		\begin{align*}
			&\int_0^s\int_{\mathbb R}
			|G(t-r,x,z)-G(s-r,x,z)|^2
			\,dz\,dr\\
			&=
			\int_0^{s-h}\int_{\mathbb R}
			|G(s+h-r,x,z)-G(s-r,x,z)|^2
			\,dz\,dr\\
			&\quad+
			\int_{s-h}^{s}\int_{\mathbb R}
			|G(t-r,x,z)-G(s-r,x,z)|^2
			\,dz\,dr\\
			&=:K_1+K_2.
		\end{align*}
		Applying Lemma~\ref{le:G-G2} with its parameters
		\((s,t,h)\) replaced by \((s-h,s,h)\), we obtain
		\[
		K_1
		\leq
		Ch^2\bigl(s-(s-h)\bigr)^{-3/2}
		\leq
		Ch^{1/2}.
		\]
		Moreover, by $|a-b|^2\leq2a^2+2b^2, $ with $a,b\in\mathbb{R}$ and
		Lemma~\ref{le:quasi-semigroup},
		\begin{align*}
			K_2
			&\leq
			C
			\int_{s-h}^{s}
			\left(
			(t-r)^{-1/2}+(s-r)^{-1/2}
			\right)\,dr\\
			&\leq
			Ch^{1/2}.
		\end{align*}
		If \(h\geq s\), the same bound follows directly from
		\(|a-b|^2\leq2a^2+2b^2\) and
		Lemma~\ref{le:quasi-semigroup}. Hence,
		\[
		\int_0^s\int_{\mathbb R}
		|G(t-r,x,z)-G(s-r,x,z)|^2
		\,dz\,dr
		\leq
		C|t-s|^{1/2}.
		\]
		Therefore,
		\[
		\|\Lambda_1\|_p^2
		\leq
		C_{p,T}|t-s|^{1/2},
		\]
		and consequently,
	\begin{equation}\label{eq:I1-p-final}
			\|\Lambda_1\|_p
			\leq C_{p,T}|t-s|^{\frac 14}.
		\end{equation}
		
		Similarly, applying the Burkholder--Davis--Gundy inequality and
		Minkowski's integral inequality once again, we obtain
		\begin{align}
			\|\Lambda_2\|_p^2
			&\leq
			C_p
			\int_s^t\int_{\mathbb R}
			\|\sigma(u(r,z))\|_p^2
			G(t-r,x,z)^2
			\,dz\,dr
			\notag\\
			&\leq
			C_{p,T}
			\int_s^t\int_{\mathbb R}
			G(t-r,x,z)^2
			\,dz\,dr.
			\label{eq:I2-p-estimate}
		\end{align}
		By Lemma~\ref{le:quasi-semigroup},
		\begin{align*}
			\int_s^t\int_{\mathbb R}
			G(t-r,x,z)^2
			\,dz\,dr
			&\leq
			C
			\int_s^t(t-r)^{-\frac12}\,dr\\
			&\leq
			C|t-s|^{\frac12}.
		\end{align*}
		Therefore,
		\begin{equation}\label{eq:I2-p-final}
			\|\Lambda_2\|_p
			\leq C_{p,T}|t-s|^{\frac14}.
		\end{equation}
		
		Combining \eqref{eq:I1-p-final} and \eqref{eq:I2-p-final} with the
		triangle inequality in \(L^p(\Omega)\), we obtain
		\[
		\|u(t,x)-u(s,x)\|_p
		\leq
		\|\Lambda_1\|_p+\|\Lambda_2\|_p
		\leq
		C_{p,T}|t-s|^{\frac14}.
		\]
		Raising both sides to the \(p\)-th power yields
		\[
		\mathbb E|u(t,x)-u(s,x)|^p
		\leq C_{p,T}|t-s|^{\frac p4},
		\]
		which proves \eqref{eq:temporal-regularity-p}.
\end{proof}

The following lemma provides a moment estimate for the spatial increments
of the solution.

\begin{lemma}\label{le:helix-x}
	Let \(p\geq 2\). For every \(t\in[0,T]\), the solution given by
	\eqref{eq:solution} satisfies
	\begin{equation}\label{eq:spatial-regularity}
		\mathbb E|u(t,x)-u(t,y)|^p
		\leq C_{p,T}|x-y|^{\frac p2},
	\end{equation}
	for all \(x,y\in\mathbb R\), where \(C_{p,T}>0\) is a constant
	independent of \(t,x\), and \(y\).
\end{lemma}

\begin{proof}
	By \eqref{eq:solution}, we have
	\[
	u(t,x)-u(t,y)
	=
	\int_0^t\int_{\mathbb R}
	\sigma(u(r,z))
	\bigl(G(t-r,x,z)-G(t-r,y,z)\bigr)
	W(dr,dz).
	\]
	Therefore, by the Burkholder--Davis--Gundy inequality and
	Minkowski's inequality,
	\begin{align}
		\|u(t,x)-u(t,y)\|_p^2
		&\leq
		C_p
		\left\|
		\int_0^t\int_{\mathbb R}
		|\sigma(u(r,z))|^2
		|G(t-r,x,z)-G(t-r,y,z)|^2
		\,dz\,dr
		\right\|_{\frac p2}
		\notag\\
		&\leq
		C_p
		\int_0^t\int_{\mathbb R}
		\|\sigma(u(r,z))\|_p^2
		|G(t-r,x,z)-G(t-r,y,z)|^2
		\,dz\,dr.
		\label{eq:spatial-increment-BDG}
	\end{align}
	By condition \eqref{eq:sigma} and the uniform moment estimate
	\eqref{eq:exist},
	\[
	\sup_{(r,z)\in[0,T]\times\mathbb R}
	\|\sigma(u(r,z))\|_p<\infty.
	\]
	Hence, \eqref{eq:spatial-increment-BDG} yields
	\begin{equation}\label{eq:spatial-increment-kernel}
		\|u(t,x)-u(t,y)\|_p^2
		\leq
		C_{p,T}
		\int_0^t\int_{\mathbb R}
		|G(t-r,x,z)-G(t-r,y,z)|^2
		\,dz\,dr.
	\end{equation}
	
	By the argument used to verify Condition \(H_2([0,1])\) in the
	proof of Theorem~6 of Zili and Zougar~\cite{MR4028081}, together
	with Lemma~4 therein, we have
	\begin{equation}\label{eq:kernel-spatial-increment}
		\int_0^t\int_{\mathbb R}
		|G(t-r,x,z)-G(t-r,y,z)|^2
		\,dz\,dr
		\leq C|x-y|,
		\qquad x,y\in\mathbb R.
	\end{equation}
	Although Condition \(H_2\) is stated therein on the interval
	\([0,1]\) in \cite{MR4028081}, its proof does not use the restriction
	\(x,y\in[0,1]\), and Lemma~4 therein is valid for all
	\(x,y\in\mathbb R\).

	Combining \eqref{eq:spatial-increment-kernel} and
	\eqref{eq:kernel-spatial-increment}, we obtain
	\[
	\|u(t,x)-u(t,y)\|_p^2
	\leq C_{p,T}|x-y|.
	\]
	Consequently,
	\[
	\|u(t,x)-u(t,y)\|_p
	\leq C_{p,T}|x-y|^{\frac12}.
	\]
	Raising both sides to the \(p\)-th power proves
	\eqref{eq:spatial-regularity}.
\end{proof}

\begin{assumption}
	\label{ass:nondegenerate}
	There exists a constant \(\sigma_0>0\) such that
	\[
	|\sigma(x)|\geq \sigma_0,\qquad x\in\mathbb R .
	\]
\end{assumption}
\begin{remark}
	\label{rem:why-nondegenerate}
	Assumption \ref{ass:nondegenerate} is only needed when a lower bound for the
	increments of the nonlinear solution is required. Such a lower bound cannot
	follow from the Lipschitz continuity of \(\sigma\) alone. Indeed, if
	\(\sigma(0)=0\) and the initial condition is zero, then the identically zero
	random field is a mild solution. By uniqueness, it is the only mild solution,
	and therefore no positive lower bound of the form
	\[
	\mathbb E|u(t,x)-u(s,x)|^2\geq c |t-s|^{1/2}
	\]
	can hold in general. Thus, a non-degeneracy condition is necessary if one wants
	to formulate a quasi-helix type lower estimate for the nonlinear equation.
\end{remark}
\begin{lemma}
	\label{lem:time-lower}
	Assume that \(\sigma\) is globally Lipschitz and that Assumption
	\ref{ass:nondegenerate} holds. Fix \(0<T_1<T_2\), \(x\in\mathbb R\), and
	\(p\geq2\). Then there exist constants \(c_{p,T_1,T_2,x}>0\) and
	\(h_0>0\) such that, for all \(s,t\in[T_1,T_2]\) with
	\(0<|t-s|\leq h_0\),
	\[
	\mathbb E|u(t,x)-u(s,x)|^p
	\geq
	c_{p,T_1,T_2,x}|t-s|^{p/4}.
	\]
\end{lemma}

\begin{proof}
	It is enough to prove the assertion for \(p=2\), since for \(p\geq2\),
	\[
	\mathbb E|X|^p\geq \bigl(\mathbb E|X|^2\bigr)^{p/2}
	\]
	for every random variable \(X\in L^p(\Omega)\).
	
	Assume without loss of generality that \(s<t\), and set \(h:=t-s\). By the
	mild solution and by the convention \(G(r,\cdot,\cdot)=0\) for \(r\leq0\),
	we have
	\[
	u(t,x)-u(s,x)
	=
	\int_0^t\int_{\mathbb R}
	\bigl(
	G(t-r,x,z)-G(s-r,x,z)
	\bigr)
	\sigma(u(r,z))\,W(dr,dz).
	\]
	Hence, by the It\^o isometry and Assumption \ref{ass:nondegenerate},
	\[
	\begin{aligned}
		\mathbb E|u(t,x)-u(s,x)|^2
		&=
		\int_0^t\int_{\mathbb R}
		\bigl(
		G(t-r,x,z)-G(s-r,x,z)
		\bigr)^2
		\mathbb E\bigl[\sigma^2(u(r,z))\bigr]\,dz\,dr  \\
		&\geq
		\sigma_0^2
		\int_0^t\int_{\mathbb R}
		\bigl(
		G(t-r,x,z)-G(s-r,x,z)
		\bigr)^2\,dz\,dr .
	\end{aligned}
	\]
	Let \(v\) denote the solution of the associated linear equation obtained by
	taking \(\sigma\equiv1\), namely
	\[
	v(t,x)
	=
	\int_0^t\int_{\mathbb R}
	G(t-r,x,z)\,W(dr,dz).
	\]
	Then
	\[
	\int_0^t\int_{\mathbb R}
	\bigl(
	G(t-r,x,z)-G(s-r,x,z)
	\bigr)^2\,dz\,dr
	=
	\mathbb E|v(t,x)-v(s,x)|^2 .
	\]
	By the temporal increment expansion in the linear case, see Zili and Zougar
	\cite[Lemma 5]{MR3992994}, uniformly for \(s\in[T_1,T_2]\),
	\[
	\mathbb E|v(s+h,x)-v(s,x)|^2
	=
	\sqrt{\frac{2\tau(x)}{\pi A(x)}}\,h^{1/2}
	+
	O(h^2),
	\qquad h\downarrow0.
	\]
	Since \(\tau(x)>0\) and \(A(x)>0\), there exists \(h_0>0\) such that, for
	all \(0<h\leq h_0\),
	\[
	\mathbb E|v(s+h,x)-v(s,x)|^2
	\geq
	\frac12
	\sqrt{\frac{2\tau(x)}{\pi A(x)}}\,h^{1/2}.
	\]
	Consequently,
	\[
	\mathbb E|u(t,x)-u(s,x)|^2
	\geq
	\frac{\sigma_0^2}{2}
	\sqrt{\frac{2\tau(x)}{\pi A(x)}}\,|t-s|^{1/2}.
	\]
	The conclusion for general \(p\geq2\) follows from the monotonicity of \(L^p\)
	norms.
\end{proof}

\begin{remark}
	\label{rem:role-of-lower-bound}
	The lower bound above is used only to clarify the quasi-helix type behavior of the nonlinear solution under the non-degeneracy condition. It is not required for the existence, uniqueness, or upper regularity estimates.
\end{remark}

\begin{lemma}
		\label{lem:space-lower}
		Assume that \(\sigma\) is globally Lipschitz and that Assumption
		\ref{ass:nondegenerate} holds. Fix \(0<T_1<T_2\), \(p\geq2\), and let
		\(I\subset[0,\infty)\) be a compact interval. Then there exists a constant
		\(c_{p,T_1,I}>0\) such that, for all \(t\in[T_1,T_2]\) and all
		\(x,y\in I\),
		\[
		\mathbb E|u(t,x)-u(t,y)|^p
		\geq
		c_{p,T_1,I}|x-y|^{p/2}.
		\]
	\end{lemma}
	
	\begin{proof}
		It is enough to prove the assertion for \(p=2\), since the general case
		\(p\geq2\) follows from the monotonicity of \(L^p\)-norms.
		
		By the mild formulation,
		\[
		u(t,x)-u(t,y)
		=
		\int_0^t\int_{\mathbb R}
		\bigl(
		G(t-r,x,z)-G(t-r,y,z)
		\bigr)
		\sigma(u(r,z))\,W(dr,dz).
		\]
		Hence, by the It\^o isometry for Walsh integrals and Assumption
		\ref{ass:nondegenerate},
		\[
		\begin{aligned}
			\mathbb E|u(t,x)-u(t,y)|^2
			&=
			\int_0^t\int_{\mathbb R}
			\bigl(
			G(t-r,x,z)-G(t-r,y,z)
			\bigr)^2
			\mathbb E\bigl[\sigma^2(u(r,z))\bigr]\,dz\,dr  \\
			&\geq
			\sigma_0^2
			\int_0^t\int_{\mathbb R}
			\bigl(
			G(t-r,x,z)-G(t-r,y,z)
			\bigr)^2\,dz\,dr .
		\end{aligned}
		\]
		The last integral is exactly the second moment of the spatial increment
of the corresponding linear solution. By condition \(H_1(I)\) established
in Zili and Zougar \cite{MR4028081}, applied at the fixed time \(T_1\),
there exists a constant \(c_{T_1,I}>0\) such that
\[
\int_0^{T_1}\int_{\mathbb R}
\bigl(
G(s,x,z)-G(s,y,z)
\bigr)^2\,dz\,ds
\geq
c_{T_1,I}|x-y|,
\qquad x,y\in I.
\]
Moreover, by the change of variables \(s=t-r\), for every
\(t\in[T_1,T_2]\),
\[
\begin{aligned}
&\int_0^t\int_{\mathbb R}
\bigl(
G(t-r,x,z)-G(t-r,y,z)
\bigr)^2\,dz\,dr\\
&\qquad=
\int_0^t\int_{\mathbb R}
\bigl(
G(s,x,z)-G(s,y,z)
\bigr)^2\,dz\,ds\\
&\qquad\geq
\int_0^{T_1}\int_{\mathbb R}
\bigl(
G(s,x,z)-G(s,y,z)
\bigr)^2\,dz\,ds
\geq c_{T_1,I}|x-y|.
\end{aligned}
\]
		Therefore,
		\[
		\mathbb E|u(t,x)-u(t,y)|^2
		\geq
		\sigma_0^2 c_{T_1,I}|x-y|.
		\]
		The proof is complete.
	\end{proof}
	
	\begin{corollary}
		\label{cor:two-sided-increments}
		Assume that \(\sigma\) is globally Lipschitz and that Assumption
		\ref{ass:nondegenerate} holds.
		
		\noindent
		\((i)\) Fix \(0<T_1<T_2\), \(x\in\mathbb R\), and \(p\ge2\). Then there exist
		constants \(0<c_{p,T_1,T_2,x}\le C_{p,T_1,T_2}<\infty\) and \(h_0>0\)
		such that, for all \(s,t\in[T_1,T_2]\) with \(0<|t-s|\le h_0\),
		\[
		c_{p,T_1,T_2,x}|t-s|^{p/4}
		\le
		\mathbb E|u(t,x)-u(s,x)|^p
		\le
		C_{p,T_1,T_2}|t-s|^{p/4}.
		\]
		In particular, taking \(p=2\), the temporal increments satisfy the quasi-helix
		type estimate
		\[
		\mathbb E|u(t,x)-u(s,x)|^2
		\asymp
		|t-s|^{1/2},
		\qquad 0<|t-s|\le h_0 .
		\]
		
		\noindent
		\((ii)\) Fix \(0<T_1<T_2\), \(p\ge2\), and let \(I\subset[0,\infty)\) be a
		compact interval. Then there exist constants
		\(0<c_{p,T_1,T_2,I}\le C_{p,T_1,T_2}<\infty\) such that, for all
		\(t\in[T_1,T_2]\) and all \(x,y\in I\),
		\[
		c_{p,T_1,T_2,I}|x-y|^{p/2}
		\le
		\mathbb E|u(t,x)-u(t,y)|^p
		\le
		C_{p,T_1,T_2}|x-y|^{p/2}.
		\]
		In particular, taking \(p=2\),
		\[
		\mathbb E|u(t,x)-u(t,y)|^2
		\asymp
		|x-y|,
		\qquad x,y\in I.
		\]
	\end{corollary}
	
	\begin{proof}
		The lower bounds follow from Lemma \ref{lem:time-lower} and Lemma
		\ref{lem:space-lower}, respectively. The upper bounds follow from Lemma
		\ref{le:helix-t} and Lemma \ref{le:helix-x}.
	\end{proof}

The upper increment estimates will be used in the proof of the
main result, while the lower bounds describe the quasi-helix type
behavior of the solution. As a direct consequence of the preceding temporal and spatial increment
estimates, we obtain the joint continuity of the solution.

\begin{corollary}\label{cor:joint-continuity}
	The solution \(u\) admits a jointly continuous modification on
	\([0,T]\times\mathbb R\).
\end{corollary}

\begin{proof}
	Fix \(R>0\) and let \(p>8\). By the triangle inequality,
	Lemma~\ref{le:helix-t}, and Lemma~\ref{le:helix-x}, for all
	\(s,t\in[0,T]\) and \(x,y\in[-R,R]\), we have
	\begin{align*}
		\mathbb E|u(t,x)-u(s,y)|^p
		&\leq
		C_p\mathbb E|u(t,x)-u(s,x)|^p
		+
		C_p\mathbb E|u(s,x)-u(s,y)|^p\\
		&\leq
		C_{p,T}
		\left(
		|t-s|^{p/4}+|x-y|^{p/2}
		\right).
	\end{align*}
	Let
	\[
	\rho\big((t,x),(s,y)\big)
	:=
	\sqrt{|t-s|^2+|x-y|^2}.
	\]
	Whenever \(\rho((t,x),(s,y))\leq1\), it follows that
	\[
	\mathbb E|u(t,x)-u(s,y)|^p
	\leq
	C_{p,T,R}\,
	\rho\big((t,x),(s,y)\big)^{p/4}.
	\]
	Since \(p/4>2\), the two-parameter Kolmogorov continuity theorem implies
	that \(u\) admits a jointly continuous modification on
	\([0,T]\times[-R,R]\). By a standard localization argument over \(R\in\mathbb N\), these local
	modifications can be chosen consistently. Hence \(u\) admits a jointly continuous modification on
	\([0,T]\times\mathbb R\).
\end{proof}

\section{Temporal Variation}
In this section, we fix \(0<T_1<T_2\) and \(x\in\mathbb R\), and define
the temporal increment by
\[
\Delta(t,\delta):=u(t,x)-u(t-\delta,x),
\]
where
\[
\delta:=\frac{T_2-T_1}{n}.
\]
For every $n\geq1$, we set the temporal quartic variation of the path
\(t\mapsto u(t,x)\) by
\begin{equation}\label{eq:V}
V_{n,x}(u):=\sum_{i=1}^{n}(u(t_{i},x)-u(t_{i-1},x))^4,
\end{equation}
with
\begin{equation}\label{eq:t_i}
t_i=T_1+i\delta, \quad\text{for}\quad i=0,\dots, n.
\end{equation}
In this paper, we mainly analyze the asymptotic behavior as $n$ tends to infinity of the sequence $(V_{n,x}, n\geq1, x\in\mathbb{R})$. For the linear case, we refer to Zili and Zougar \cite{MR3992994}, from which we will use several useful results.

The key idea is that the main contribution to $ \Delta(t+\delta,\delta) $ comes from the noise near time $ t $. We will approximate the increment by $ \sigma(u(t(\delta),x))\widetilde{\Delta}(t,\delta)$ where $t(\delta)$ is a point close to $t$ given by $ t(\delta)=t-\delta^{4/5} $. Throughout this section, we use the convention that
$G(t,x,z)=0$ when $t\leq0$. Let us set the "modified temporal increment" of $u(t,x)$ given by
\begin{multline*}
\widetilde{\Delta}(t,\delta)=\int_0^{t(\delta)}\int_{\mathbb{R}}(G(t+\delta-r,x,z)-G(t-r,x,z))\widetilde{W}(dz,dr)\\
+\int_{t(\delta)}^{t+\delta}\int_{\mathbb{R}}(G(t+\delta-r,x,z)-G(t-r,x,z))W(dz,dr)
\end{multline*}
where \(\widetilde W\) is a space--time white noise independent of \(W\). Notice that \(\widetilde{\Delta}(t,\delta)\) is a centered Gaussian random
variable with the same distribution as the corresponding linear Gaussian
increment. Moreover, it is independent of \(\mathcal F_{t(\delta)}\). The value of $ t(\delta) $ is chosen to optimize the estimation in the later theorem, which shows the approximation is valid. 

The next lemma gives an approximation about the temporal increment of the solution (\ref{eq:solution}).
\begin{lemma}\label{le:imp esti}
	For any $ T>0 $, there exists a constant \(C_{T}>0\) such that
	\[
	\mathbb{E}[|\Delta(t+\delta,\delta)-\sigma(u(t(\delta),x))
	\widetilde{\Delta}(t,\delta)|^2]
	\leq C_{T}\delta^{4/5},
	\]
	for all \(x\in\mathbb{R}\) and \(t,\delta>0\) satisfying
	\[
	t+\delta\leq T
	\qquad\text{and}\qquad
	0<\delta^{4/5}\leq t\wedge1.
	\]
\end{lemma}
\begin{proof}
	Note again that $ \widetilde{\Delta}(t,\delta) $ is independent of the sigma-field generated by \(\mathcal F_{t(\delta)}\), so we write the difference $\Delta(t+\delta,\delta)-\sigma(u(t(\delta),x))\widetilde{\Delta}(t,\delta)$ as
	\begin{align*}
		&\qquad\Delta(t+\delta,\delta)-\sigma(u(t(\delta),x))\widetilde{\Delta}(t,\delta)\\
		&=\int_0^{t+\delta} \int_{\mathbb{R}} G(t+\delta-r,x,z) \sigma(u(r,z)) W(dz, dr)-\int_0^t \int_{\mathbb{R}} G(t-r,x,z) \sigma(u(r,z)) W(dz,dr) \\
		&\quad-\sigma(u(t(\delta), x))\int_0^{t(\delta)} \int_{\mathbb{R}}\left(G(t+\delta-r,x,z)-G(t-r,x,z)\right) \widetilde{W}(dz, dr) \\
		&\quad-\sigma(u(t(\delta), x)) \int_{t(\delta)}^{t+\delta} \int_{\mathbb{R}}\left(G(t+\delta-r,x,z)-G(t-r,x,z)\right) W(dz, dr) \\
		&=\int_0^{t(\delta)} \int_{\mathbb{R}}\left(G(t+\delta-r,x,z)-G(t-r,x,z)\right) \sigma(u(r,z))W(dz, dr) \\
		&\quad+\int_{t(\delta)}^{t+\delta} \int_{\mathbb{R}}\left(G(t+\delta-r,x,z)-G(t-r,x,z)\right) (\sigma(u(r,z))-\sigma(u(t(\delta),x))) W(dz, dr) \\
		&\quad-\int_{0}^{t(\delta)} \int_{\mathbb{R}}\left(G(t+\delta-r,x,z)-G(t-r,x,z)\right) \sigma(u(t(\delta), x)) \widetilde{W}(dz, dr) \\
		&:=I_1+I_2-I_3.
	\end{align*}
	
	Consequently, we have
	\begin{equation*}
		\mathbb{E}[|\Delta(t+\delta,\delta)-\sigma(u(t(\delta),x))
		\widetilde{\Delta}(t,\delta)|^2]
		=\mathbb{E}[|I_1+I_2-I_3|^2]
		\leq C(\mathbb{E}|I_1|^2+\mathbb{E}|I_2|^2+\mathbb{E}|I_3|^2) .
	\end{equation*}
	
	Now we estimate each of these terms respectively. By using the It\^o isometry, (\ref{eq:exist}) and Lemma \ref{le:G-G2}, we have
	\begin{align}
		\mathbb{E}|I_1|^2
		&=
		\int_0^{t(\delta)}\int_{\mathbb{R}}
		\left(
		G(t+\delta-r,x,z)-G(t-r,x,z)
		\right)^2
		\mathbb{E}\left|\sigma(u(r,z))\right|^2
		\,dz\,dr \notag\\
		&\leq
		C_{T}
		\int_0^{t(\delta)}\int_{\mathbb{R}}
		\left(
		G(t+\delta-r,x,z)-G(t-r,x,z)
		\right)^2
		\,dz\,dr \notag\\
		&\leq
		C_{T}
		\delta^2
		\left(t-t(\delta)\right)^{-3/2}\notag\\
		&=
		C_{T}\delta^{4/5}.
		\label{eq:est.I_1}
	\end{align}
	
	Moreover, since
	\(\sigma(u(t(\delta),x))\) is independent of the stochastic integral
	with respect to \(\widetilde W\), the It\^o isometry and
	Lemma~\ref{le:G-G2} yield
	\begin{align}
		\mathbb E|I_3|^2
		&=
		\mathbb E\left|\sigma(u(t(\delta),x))\right|^2
		\int_0^{t(\delta)}\int_{\mathbb R}
		\left(
		G(t+\delta-r,x,z)-G(t-r,x,z)
		\right)^2
		\,dz\,dr \notag\\
		&\leq
		C_{T}\delta^{4/5}.
		\label{eq:est.I_3}
	\end{align}

	The term $\mathbb{E}|I_2|^2$ can be written as
	\[
	\int_{t(\delta)}^{t+\delta}\int_{\mathbb{R}}
	(G(t+\delta-r,x,z)-G(t-r,x,z))^2
	\mathbb E\left|
	\sigma(u(r,z))-\sigma(u(t(\delta),x))
	\right|^2\,dz\,dr.
	\]
	By using the property in Lemma \ref{le:helix-t} and Lemma \ref{le:helix-x}, we have
	\begin{eqnarray*}
		&&\mathbb{E}|I_2|^2\leq C_T\int_{t(\delta)}^{t+\delta}\int_{\mathbb{R}}(G(t+\delta-r,x,z)-G(t-r,x,z))^2(\delta^{2/5}+|x-z|)dz\,dr\\
		&\leq& C_T\int_{t(\delta)}^{t+\delta}\int_{\mathbb{R}}(G(t+\delta-r,x,z)^2+G(t-r,x,z)^2)(\delta^{2/5}+|x-z|)dz\,dr\\
		&:=&O_1+O_2,
	\end{eqnarray*}
	where
	\[
	O_1:=C_T\int_{t(\delta)}^{t+\delta}\int_{\mathbb{R}}G(t+\delta-r,x,z)^2(\delta^{2/5}+|x-z|)dz\,dr,
	\]
	and
	\[
	O_2:=C_T\int_{t(\delta)}^{t+\delta}\int_{\mathbb{R}}G(t-r,x,z)^2(\delta^{2/5}+|x-z|)dz\,dr.
	\]
	(In fact, the estimate of the second term $O_2$ is entirely similar to $O_1$, and we omit it.) Our primary focus will be on analyzing the first item.
	For this purpose, let us rewrite the $O_1$ as
	\[
	O_1=C_T\delta^{2/5}\int_{t(\delta)}^{t+\delta}\int_{\mathbb{R}}G(t+\delta-r,x,z)^2dz\,dr+C_T\int_{t(\delta)}^{t+\delta}\int_{\mathbb{R}}G(t+\delta-r,x,z)^2|x-z|dz\,dr:=O_{11}+O_{12}.
	\]
	By applying Lemma \ref{le:quasi-semigroup}, we have
	\begin{equation*}
		O_{11}\leq C_T\delta^{2/5}\int_{t(\delta)-\delta}^{t}\frac{1}{\sqrt{t-r}}dr\leq C_T(\delta^{9/10}+\delta^{4/5}).
	\end{equation*}
	Furthermore,
	\begin{align*}
		O_{12}&\leq C_T\int_{t(\delta)}^{t+\delta}\int_{\mathbb{R}}\left|\frac{1}{\sqrt{t+\delta-r}}\exp\left(-\frac{(f(x)-f(z))^2}{2(t+\delta-r)}\right)\right|^2|x-z|dz\,dr\\
		&\leq C_T\int_{t(\delta)-\delta}^{t}\int_{\mathbb{R}}\left|\frac{1}{\sqrt{t-r}}\exp\left(-\frac{(f(x)-f(z))^2}{2(t-r)}\right)\right|dz\,dr\\
		&\leq C_T(t+\delta-t(\delta))\leq C_T(\delta+\delta^{4/5}),
	\end{align*}
	where the first inequality follows from Lemma \ref{le:Gleq}, and the second one follows from the fact that
	\begin{multline*}
		\sup\limits_{r, z}\left|\frac{1}{\sqrt{t-r}}\exp\left(-\frac{(f(x)-f(z))^2}{2(t-r)}\right)\right||x-z|\\
		\leq \sup\limits_{r, z}\left|\frac{1}{\sqrt{t-r}}\exp\left(-\frac{(x-z)^2}{2(a_1\vee a_2)(t-r)}\right)\right||x-z|<\infty.
	\end{multline*}
	Further, the last estimate follows from the elementary change of variables on the two
	half-lines. Indeed, 
	\begin{multline*}
		\int_{\mathbb{R}}\exp\left(-\frac{(f(x)-f(z))^2}{2(t-r)}\right)dz=\int_{-\infty}^0\exp\left(-\frac{(f(x)-\frac{z}{\sqrt{a_1}})^2}{2(t-r)}\right)dz\\
		+\int_0^{\infty}\exp\left(-\frac{(f(x)-\frac{z}{\sqrt{a_2}})^2}{2(t-r)}\right)dz\leq (\sqrt{a_1}\vee \sqrt{a_2})\sqrt{2\pi(t-r)}
	\end{multline*}
	(Actually, a similar argument has been made in the proof of Corollary 2.1 in Mishura \emph{et al.}~\cite{MR4192903}.) 
	Therefore, we have
	\begin{equation}\label{eq:est.I_2}
		\mathbb{E}|I_2|^2\leq  C_T\,\delta^{4/5},
	\end{equation}
	since $\delta$ is small enough.
	
	Combined with (\ref{eq:est.I_1}), (\ref{eq:est.I_3}) and (\ref{eq:est.I_2}), we obtain the desired result.
\end{proof}
Using this approximation, we now prove the temporal quartic variation for the
nonlinear equation. The Gaussian moment identities will be applied only to the
auxiliary Gaussian increment, while the nonlinear increment itself is handled
through Lemma \ref{le:imp esti}.
\begin{remark}
	\label{rem:role-of-frozen-approximation}
	Lemma \ref{le:imp esti} is the key step that allows us to pass from the
	nonlinear increment to a Gaussian approximation. We emphasize that the
	nonlinear increment
	\[
	\Delta(t+\delta,\delta)
	=
	u(t+\delta,x)-u(t,x)
	\]
	is not a Gaussian random variable in general. Instead, it is approximated in
	\(L^2(\Omega)\) by the frozen-coefficient Gaussian proxy
	\[
	\sigma(u(t(\delta),x))\widetilde{\Delta}(t,\delta).
	\]
	Here \(\sigma(u(t(\delta),x))\) is measurable with respect to
	\(\mathcal F_{t(\delta)}\), while the auxiliary increment
	\(\widetilde{\Delta}(t,\delta)\) is independent of \(\mathcal F_{t(\delta)}\)
	and has the same distribution as the corresponding linear Gaussian increment.
	
	Therefore, in the proof of Theorem \ref{th:tem vari}, Gaussian moment
	identities and the linear variance asymptotics are applied only to the
	auxiliary Gaussian increment \(\widetilde{\Delta}(t,\delta)\), and not to the
	nonlinear increment itself. The estimate in Lemma \ref{le:imp esti} ensures
	that the replacement error is negligible in the quartic variation. Consequently,
	the limiting constant
	\[
	\frac{6\tau(x)}{\pi A(x)}
	\]
	comes from the fourth moment of the auxiliary Gaussian increment, whereas the
	nonlinearity is retained in the limiting integral through the factor
	\(\sigma^4(u(t,x))\).
\end{remark}
\begin{theorem}\label{th:tem vari}
	Fix $0<T_1<T_2$ and $x\in\mathbb{R}$. Define for $i=0, 1,\dots, n$ a time grid by $t_i=T_1+i\delta$, where $\delta=\frac{T_2-T_1}{n}$. Then the following limit holds in probability
	\begin{equation}\label{eq:tem-vari}
		\lim\limits_{n\rightarrow\infty}\sum_{i=1}^{n}\left(u(t_i, x)-u(t_{i-1}, x)\right)^4=\frac{6\tau(x)}{\pi A(x)}\int_{T_1}^{T_2}\sigma^4(u(r,x))dr\,,
	\end{equation}
	with $\tau(x)=\eta^2\mathbbm{1}_{\{x=0\}}+\mathbbm{1}_{\{x\neq0\}}$ and $\eta=\frac{1}{2}\left((1-\beta)^2+\sqrt{\frac{a_1}{a_2}}(1+\beta)^2\right)$.
	Moreover, we have
	\begin{equation}\label{eq:L1con}
		\mathbb{E}\left|\sum_{i=1}^{n}\left(u(t_i, x)-u(t_{i-1}, x)\right)^4-\frac{6\tau(x)}{\pi A(x)}\int_{T_1}^{T_2}\sigma^4(u(r,x))dr\right|\leq C_{T_1,T_2,x}\,n^{-\frac{3}{20}},
	\end{equation}
	for all sufficiently large \(n\)
\end{theorem}
\begin{proof}
	To prove \eqref{eq:tem-vari}, it suffices to show that
	\[
	\sum_{i=1}^{n}(u(t_i, x)-u(t_{i-1},x))^4-\frac{6\tau(x)}{\pi A(x)}\int_{T_1}^{T_2}\sigma^4(u(r,x))dr\rightarrow0\,\,\, \text{as}\,\,\, n\rightarrow\infty.
	\]
	We use one further modification of the approximation introduced above.
	Let \(\{\widetilde W_i\}_{i\geq1}\) be a sequence of independent
	space--time white noises, independent of \(W\). For each \(i=1,\ldots,n\),
	let \(\widetilde\Delta_i(t_{i-1},\delta)\) be defined in the same way as
	\(\widetilde\Delta(t_{i-1},\delta)\), with \(\widetilde W\) replaced by
	\(\widetilde W_i\). Then, for each \(i\),
	\(\widetilde\Delta_i(t_{i-1},\delta)\) has the same distribution as the
	corresponding linear Gaussian increment and is independent of
	\(\mathcal F_{t_{i-1}(\delta)}\). Moreover, the estimate in
	Lemma~\ref{le:imp esti} remains valid with
	\(\widetilde\Delta(t_{i-1},\delta)\) replaced by
	\(\widetilde\Delta_i(t_{i-1},\delta)\).
	
	We need to break the required convergence into three parts as follows
	\begin{align}
		&{}\sum_{i=1}^{n}\Delta^4(t_i,\delta)-\frac{6\tau(x)}{\pi A(x)}\int_{T_1}^{T_2}\sigma^4(u(r,x))dr\notag\\
		&=\sum_{i=1}^{n}\left[\Delta^4(t_i,\delta)-\sigma^4(u(t_{i-1}(\delta),x))\widetilde{\Delta}_i^4(t_{i-1},\delta)\right]\label{eq:part11}\\
		&\quad+\sum_{i=1}^{n}\sigma^4(u(t_{i-1}(\delta),x))\left(\widetilde{\Delta}_i^4(t_{i-1},\delta)-\frac{6\delta}{\pi A(x)}\tau(x)\right)\label{eq:part22}\\
		&\quad+\frac{6\delta}{\pi A(x)}\tau(x)\sum_{i=1}^{n}\sigma^4(u(t_{i-1}(\delta),x))-\frac{6\tau(x)}{\pi A(x)}\int_{T_1}^{T_2}\sigma^4(u(r,x))dr\label{eq:part33}\\
		&=:J_1+J_2+J_3,
	\end{align}
	for all $x\in\mathbb{R}, \delta>0, n\geq 1$ and $0<T_1<T_2$, where $t_{i-1}(\delta)=t_{i-1}-\delta^{4/5}$. 
		We first consider the third term in the decomposition. For simplicity, set
		\[
		c_x:=\frac{6\tau(x)}{\pi A(x)}
		\]
		and write
		\[
		J_3
		:=
		c_x\delta\sum_{i=1}^{n}\sigma^4(u(t_{i-1}(\delta),x))
		-c_x\int_{T_1}^{T_2}\sigma^4(u(r,x))\,dr.
		\]
		Since \(t_i-t_{i-1}=\delta\), we have
		\[
		J_3
		=
		c_x\sum_{i=1}^{n}
		\int_{t_{i-1}}^{t_i}
		\left[
		\sigma^4(u(t_{i-1}(\delta),x))
		-\sigma^4(u(r,x))
		\right]\,dr.
		\]
		
		We first prove the almost sure convergence of \(J_3\). Since \(T_1>0\)
		and \(t_{i-1}(\delta)=t_{i-1}-\delta^{4/5}\), for all sufficiently large \(n\),
		\[
		t_{i-1}(\delta)\in\left[\frac{T_1}{2},T_2\right],
		\qquad i=1,\ldots,n.
		\]
		By the sample path continuity of \(u\) and the continuity of \(\sigma\),
		for almost every \(\omega\), the map
		\[
		s\longmapsto \sigma^4(u(s,x;\omega))
		\]
		is uniformly continuous on the compact interval \([T_1/2,T_2]\).
		Moreover, for every \(r\in[t_{i-1},t_i]\),
		\[
		|t_{i-1}(\delta)-r|
		\le |t_{i-1}-r|+\delta^{4/5}
		\le \delta+\delta^{4/5}.
		\]
		Consequently,
		\[
		|J_3|
		\le
		c_x(T_2-T_1)
		\sup_{\substack{s,r\in[T_1/2,T_2]\\
				|s-r|\le \delta+\delta^{4/5}}}
		\left|
		\sigma^4(u(s,x))
		-\sigma^4(u(r,x))
		\right|
		\longrightarrow 0
		\]
		almost surely as \(n\to\infty\). Hence,
		\[
		J_3\xrightarrow{\mathrm{a.s.}}0.
		\]
		
		We next establish an \(L^1(\Omega)\)-estimate. For $a,b\in\mathbb{R}$, by the fact
		\[
		|a^4-b^4|
		\le
		|a-b|
		\left(
		|a|^3+|a|^2|b|+|a||b|^2+|b|^3
		\right),
		\]
		the Cauchy--Schwarz inequality, the Lipschitz continuity of \(\sigma\),
		and the uniform moment bounds of the solution, we obtain
		\[
		\begin{aligned}
			&\mathbb E\left|
			\sigma^4(u(t_{i-1}(\delta),x))
			-\sigma^4(u(r,x))
			\right|\\
			&\qquad\le
			C_{T_1,T_2}
			\left(
			\mathbb E
			|u(t_{i-1}(\delta),x)-u(r,x)|^2
			\right)^{1/2}.
		\end{aligned}
		\]
		It follows from the temporal increment estimate
		\[
		\mathbb E|u(s,x)-u(r,x)|^2
		\le C|s-r|^{1/2}
		\]
		that
		\[
		\mathbb E\left|
		\sigma^4(u(t_{i-1}(\delta),x))
		-\sigma^4(u(r,x))
		\right|
		\le
		C_{T_1,T_2}
		|t_{i-1}(\delta)-r|^{1/4}.
		\]
		Therefore,
		\[
		\begin{aligned}
			\mathbb E|J_3|
			&\le
			C_{T_1,T_2}
			\sum_{i=1}^{n}
			\int_{t_{i-1}}^{t_i}
			|t_{i-1}(\delta)-r|^{1/4}\,dr\\
			&\le
			C_{T_1,T_2}
			n\delta
			\left(\delta+\delta^{4/5}\right)^{1/4}.
		\end{aligned}
		\]
		Since \(\delta<1\) for all sufficiently large \(n\), we have
		\(\delta\le\delta^{4/5}\), and hence
		\begin{equation}\label{eq:J_3-L1}
			\mathbb E|J_3|
			\le
			C_{T_1,T_2}\delta^{1/5}
			\le
			C_{T_1,T_2}n^{-1/5}.
		\end{equation}
	
	Further, according to the Cauchy-Schwarz inequality and Lemma \ref{le:imp esti}, we have
	\begin{align*}
		\mathbb{E}|J_1|\leq &\sum_{i=1}^{n}\mathbb{E}\left|\Delta^4(t_i,\delta)-\sigma^4(u(t_{i-1}(\delta),x))\widetilde{\Delta}_i^4(t_{i-1},\delta)\right|\\
		&\leq
		C\sum_{i=1}^{n}
		\left(
		\mathbb E|\Delta(t_i,\delta)|^6
		+
		\mathbb E\left|
		\sigma(u(t_{i-1}(\delta),x))
		\widetilde\Delta_i(t_{i-1},\delta)
		\right|^6
		\right)^{1/2}\\
		&\qquad\times
		\left(
		\mathbb E\left|
		\Delta(t_i,\delta)
		-\sigma(u(t_{i-1}(\delta),x))
		\widetilde\Delta_i(t_{i-1},\delta)
		\right|^2
		\right)^{1/2}\\
		&\leq C_{T_1,T_2}\delta^{2/5}\sum_{i=1}^{n}\left(
		\mathbb E|\Delta(t_i,\delta)|^6
		+
		\mathbb E\left|
		\sigma(u(t_{i-1}(\delta),x))
		\widetilde\Delta_i(t_{i-1},\delta)
		\right|^6
		\right)^{1/2}.
	\end{align*} 
	In fact, for all sufficiently large \(n\), we have
	\[
	t_{i-1}+\delta=t_i\leq T_2
	\]
	and
	\[
	\delta^{4/5}\leq T_1\leq t_{i-1},
	\qquad i=1,\ldots,n.
	\]
	Therefore, Lemma~\ref{le:imp esti} is applicable with
	\(t=t_{i-1}\) and \(T=T_2\). Moreover, \(\delta^{4/5}\leq1\) for all sufficiently large \(n\).
	
	Since \(\widetilde\Delta_i(t_{i-1},\delta)\) is a centered Gaussian
		random variable having the same distribution as the corresponding
		linear Gaussian increment, we know that (see, Zili and Zougar~\cite{MR3992994})
		\[
		\mathbb E\left[
		\widetilde\Delta_i^2(t_{i-1},\delta)
		\right]=\delta^{1/2}\sqrt{\frac{2\tau(x)}{\pi A(x)}}+O(\delta^2),
		\]
	where $O(\delta^2)$ denotes a quantity that is bounded by $c\delta^2$ if $\delta$ is close enough to $0$, with the same constant $c$ for all $t\in[T_1,T_2]$. Since $\widetilde{\Delta}_i(t_{i-1}, \delta)$ has the same Gaussian distribution as the corresponding linear Gaussian increment, we get
	\[
	\mathbb{E}\left|
		\widetilde{\Delta}_i(t_{i-1},\delta)
		\right|^6=15\left(\delta^{\frac{1}{2}}\sqrt{\frac{2\tau(x)}{\pi A(x)}}+O(\delta^2)\right)^3=15\left(\frac{2\tau(x)}{\pi A(x)}\right)^{3/2}\delta^{3/2}
	+O(\delta^3).
	\] 
	Moreover, by Lemma \ref{le:helix-t}, we obtain
		\[
		\mathbb{E}[\Delta^6(t_i,\delta)]\leq C\delta^{3/2}.
		\]
	Hence, we have
	\begin{equation*}
		\sum_{i=1}^{n}\mathbb{E}\left|\Delta^4(t_i,\delta)-\sigma^4(u(t_{i-1}(\delta),x))\widetilde{\Delta}_i^4(t_{i-1},\delta)\right|
		\leq C_{T_1,T_2}\delta^{\frac{2}{5}}\,n\,\delta^{\frac{3}{4}}\longrightarrow0
	\end{equation*}
	for all $x\in\mathbb{R} $ and $0<T_1<T_2$, as $n$ tends to infinity, where the above inequality follows from the independence between $\sigma(u(t_{i-1}(\delta), x))$ and $\widetilde{\Delta}_i(t_{i-1},\delta)$. It implies that
	\begin{equation}\label{eq:J_1-L1}
		\mathbb{E}|J_1|\leq C_{T_1,T_2}n^{-\frac{3}{20}}.
	\end{equation}
	
	It remains to prove that
		\[
		\sum_{i=1}^{n}
		\sigma^4(u(t_{i-1}(\delta),x))
		\left(
		\widetilde\Delta_i^4(t_{i-1},\delta)
		-c_x\delta
		\right)
		\longrightarrow 0
		\]
		in \(L^2(\Omega)\). For simplicity, we denote by
		\[
		Z_i:=\widetilde\Delta_i^4(t_{i-1},\delta)-c_x\delta,
		\qquad
		\sigma_i:=\sigma(u(t_{i-1}(\delta),x)).
		\]
		Then it is enough to show that
		\[
		\sum_{i,j=1}^{n}\mathbb E[\sigma_i^4\sigma_j^4 Z_iZ_j]\to0.
		\]
		We split the double sum into the diagonal part, the near-diagonal part
		and the far-off-diagonal part.
		
		For the diagonal terms, using the independence between
		\(\sigma_i\) and \(\widetilde\Delta_i(t_{i-1},\delta)\), together with the moment
		bounds of the mild solution and the Gaussian moment estimates for
		\(\widetilde\Delta_i(t_{i-1},\delta)\), we obtain
		\[
		\sum_{i=1}^n
		\mathbb E[\sigma_i^8 Z_i^2]
		\le
		C_{T_1,T_2}\sum_{i=1}^n \mathbb E[Z_i^2]
		\le
		C_{T_1,T_2} n\delta^2\to0.
		\]
		Indeed, since \(\widetilde\Delta_i(t_{i-1},\delta)\) is centered Gaussian and
		\[
		\mathbb E[\widetilde\Delta_i^2(t_{i-1},\delta)]
		=
		\delta^{1/2}\sqrt{\frac{2\tau(x)}{\pi A(x)}}+O(\delta^2),
		\]
		we have
		\[
		\mathbb E[\widetilde\Delta_i^4(t_{i-1},\delta)]
		=
		c_x\delta+O(\delta^{5/2}),
		\qquad
		\mathbb E[\widetilde\Delta_i^8(t_{i-1},\delta)]
		=O(\delta^2),
		\]
		and hence \(\mathbb E[Z_i^2]\le C\delta^2\).
		
		For the near-diagonal off-diagonal terms, namely \(|i-j|\le \lfloor\delta^{-1/5}\rfloor+1\), since \(\widetilde\Delta_i(t_{i-1},\delta)\) is centered Gaussian with variance
		of order \(\delta^{1/2}\), the Gaussian moment formula yields
		\[
		\mathbb E\big|\widetilde\Delta_i(t_{i-1},\delta)\big|^{16}
		\leq C\delta^4.
		\]
		Hence, for $a,b\in\mathbb{R}$, using \(|a-b|^4\leq8(|a|^4+|b|^4)\), we obtain
		\[
		\mathbb E|Z_i|^4\leq C\delta^4.
		\]
		Therefore, by H\"older's inequality and the uniform moment bounds of the mild
		solution,
		\[
		\left|
		\mathbb E[\sigma_i^4\sigma_j^4Z_iZ_j]
		\right|
		\le
		\big(\mathbb E|\sigma_i|^{16}\big)^{1/4}
		\big(\mathbb E|\sigma_j|^{16}\big)^{1/4}
		\big(\mathbb E|Z_i|^4\big)^{1/4}
		\big(\mathbb E|Z_j|^4\big)^{1/4}\le C_{T_1,T_2}\delta^2.
		\]
		Since the number of pairs satisfying \(0<|i-j|\leq \lfloor\delta^{-1/5}\rfloor+1\) is bounded by
\(2n(\lfloor\delta^{-1/5}\rfloor+1)\), it follows that
\[
\begin{aligned}
	\sum_{\substack{1\leq i,j\leq n\\0<|i-j|\leq \lfloor\delta^{-1/5}\rfloor+1}}
	\left|
	\mathbb E[\sigma_i^4\sigma_j^4Z_iZ_j]
	\right|
	&\qquad\leq
	C_{T_1,T_2,x}\,n(\lfloor\delta^{-1/5}\rfloor+1)\delta^2\\
	&\qquad\leq
	C_{T_1,T_2,x}
	n\bigl(\delta^{-1/5}+1\bigr)\delta^2\\
	&\qquad\leq
	C_{T_1,T_2,x}n^{-4/5},
\end{aligned}
\]
where we used \(\delta=(T_2-T_1)/n\).
		
	 It remains to treat the far-off-diagonal terms. Suppose, for instance,
that
\[
	i-j\geq 	\left\lfloor\delta^{-1/5}\right\rfloor+2.
\]
Then
\[
\begin{aligned}
	t_{i-1}(\delta)-t_j
	&=
	(i-j-1)\delta-\delta^{4/5}\\
	&=
	\delta\left(i-j-1-\delta^{-1/5}\right)>0.
\end{aligned}
\]
Consequently,
\[
	t_j<t_{i-1}(\delta).
\] 
Therefore \(Z_j\), \(\sigma_j\), and \(\sigma_i\) are measurable with
		respect to
		\[
		\mathcal G_{i,j}:=\mathcal F_{t_{i-1}(\delta)}
		\vee\sigma(\widetilde W_j),
		\]
		where \(\vee\) denotes the smallest sigma-field containing both sigma-fields,
		and \(\sigma(\widetilde W_j)\) is the sigma-field generated by the auxiliary
		white noise \(\widetilde W_j\). Then \(Z_j\), \(\sigma_j\), and \(\sigma_i\) are
		\(\mathcal G_{i,j}\)-measurable, while \(\widetilde\Delta_i(t_{i-1},\delta)\) is
		independent of \(\mathcal G_{i,j}\). Indeed, the part of \(\widetilde\Delta_i(t_{i-1},\delta)\) driven by
		\(\widetilde W_i\) is independent of both \(W\) and \(\widetilde W_j\),
		whereas its part driven by \(W\) only involves the noise after time
		\(t_{i-1}(\delta)\), which is independent of \(\mathcal F_{t_{i-1}(\delta)}\).
		Since \(\widetilde W_i\) and \(\widetilde W_j\) are independent for
		\(i\ne j\), it follows that
		\(\widetilde\Delta_i(t_{i-1},\delta)\) is independent of
		\(\mathcal G_{i,j}\).
		Hence
		\[
		\begin{aligned}
			\mathbb E[\sigma_i^4\sigma_j^4Z_iZ_j]
			&=
			\mathbb E\left[
			\sigma_i^4\sigma_j^4Z_j
			\,\mathbb E(Z_i\mid\mathcal G_{i,j})
			\right].
		\end{aligned}
		\]
		Since \(Z_i\) is independent of \(\mathcal G_{i,j}\), we have
		\[
		\mathbb E(Z_i\mid\mathcal G_{i,j})
		=
		\mathbb E Z_i
		=
		\mathbb E[\widetilde\Delta_i^4(t_{i-1},\delta)]-c_x\delta.
		\]
		Hence,
		\[
		\left|
		\mathbb E(Z_i\mid\mathcal G_{i,j})
		\right|
		\le C\delta^{5/2}.
		\]
		Consequently, by the Cauchy--Schwarz inequality and the uniform moment bounds,
		\[
		\mathbb E|\sigma_i^4\sigma_j^4Z_j|
		\le
		\big(\mathbb E|\sigma_i|^8|\sigma_j|^8\big)^{1/2}
		\big(\mathbb E|Z_j|^2\big)^{1/2} \le C_{T_1,T_2}\delta.
		\]
		Therefore,
\[
	\sum_{\substack{1\leq j<i\leq n\\i-j\geq \left\lfloor\delta^{-1/5}\right\rfloor+2}}
	\left|
	\mathbb E[\sigma_i^4\sigma_j^4Z_iZ_j]
	\right|
	\leq
	C_{T_1,T_2,x}
	n^2\delta^{7/2}
	\longrightarrow0.
\]
The case \(j-i\geq \left\lfloor\delta^{-1/5}\right\rfloor+2\) is treated in the same way. Combining the diagonal, near-diagonal, and far-off-diagonal estimates, we
obtain
\[
\begin{aligned}
	\mathbb E|J_2|^2
	&=
	\sum_{i,j=1}^{n}
	\mathbb E\bigl[\sigma_i^4\sigma_j^4Z_iZ_j\bigr]\\
	&\leq
	C_{T_1,T_2,x}
	\left(
	n\delta^2
	+n(\left\lfloor\delta^{-1/5}\right\rfloor+2)\delta^2
	+n^2\delta^{7/2}
	\right)\\
	&\leq
	C_{T_1,T_2,x}
	\left(
	n^{-1}
	+n^{-4/5}
	+n^{-3/2}
	\right)\\
	&\leq
	C_{T_1,T_2,x}n^{-4/5}.
\end{aligned}
\]
		Consequently,
		\[
		J_2\longrightarrow0
		\qquad\text{in }L^2(\Omega).
		\]
		Moreover, by the Cauchy--Schwarz inequality,
		\begin{equation}\label{eq:J_2-L1}
					\mathbb E|J_2|
			\le
			\big(\mathbb E|J_2|^2\big)^{1/2}
			\le
			C_{T_1,T_2}n^{-2/5}.
		\end{equation}
		
		Therefore, \eqref{eq:L1con} follows from \eqref{eq:J_3-L1}, \eqref{eq:J_1-L1} and \eqref{eq:J_2-L1}, which completes the proof.
\end{proof}

As a consequence of Theorem \ref{th:tem vari}, we obtain a consistent
estimator of the diffusion coefficient defined in \eqref{eq:A(x)} at any fixed spatial point
away from the interface.
\begin{remark}
	Assume that \(\sigma\) is known. Fix
\(x\in\mathbb R\setminus\{0\}\), and suppose additionally that
\begin{equation}\label{eq:assum}
		\mathbb P\left(
		\int_{T_1}^{T_2}\sigma^4(u(r,x))\,dr>0
		\right)=1.
	\end{equation}
    Note that this condition is automatically satisfied under Assumption
\ref{ass:nondegenerate}, since
\[
\int_{T_1}^{T_2}\sigma^4(u(r,x))\,dr
\geq
\sigma_0^4(T_2-T_1)>0
\]
almost surely.
Since \(\tau(x)>0\), for \(x\neq0\), define
\[
\widehat A_n(x)
:=
\begin{cases}
\displaystyle
\frac{
6(T_2-T_1)\sum_{i=1}^n\sigma^4(u(t_i,x))
}{
n\pi\sum_{i=1}^n
\bigl(u(t_i,x)-u(t_{i-1},x)\bigr)^4
},
&
V_{n,x}(u)>0,
\\[4mm]
0,
&
V_{n,x}(u)=0.
\end{cases}
\]The estimator is based on high-frequency temporal observations of \(u\) at a fixed spatial point over the fixed interval \([T_1,T_2]\).

By the Riemann-sum argument,
\[
\delta\sum_{i=1}^{n}\sigma^4(u(t_i,x))
\longrightarrow
\int_{T_1}^{T_2}\sigma^4(u(r,x))\,dr
\]
in probability. Therefore, Theorem~\ref{th:tem vari}, the assumption \eqref{eq:assum}, and the continuous mapping theorem imply that
\[
\widehat A_n(x)\xrightarrow{\mathbb P}A(x).
\]

In particular, \(\widehat A_n(x)\) is a consistent estimator of
\(a_1\) when \(x<0\), and of \(a_2\) when \(x>0\).

At the interface \(x=0\), the factor
\(\tau(0)=\eta^2\) depends on the model parameters entering
\(\eta\). Consequently, estimation at the interface requires
additional information or a joint estimation procedure and is not
considered here.
\end{remark}

Motivated by Hildebrandt and Trabs~\cite{MR4280160}, we introduce the
spatial average of the temporal quartic variations over an interval
away from the interface. We now justify the uniformity in the spatial variable. Fix \(\varepsilon\in(0,1)\). An inspection
of the proof of the linear variance expansion in
\cite[Lemma~5]{MR3992994} shows that its \(O(\delta^2)\) remainder
can be chosen uniformly for
\[
(s,x)\in[T_1,T_2]\times[\varepsilon,1].
\]
Indeed, on this set, \(A(x)=a_2\) and \(\operatorname{sign}(x)=1\),
while the functions and derivatives involved in the corresponding
Taylor expansions remain uniformly bounded, since \(T_1>0\) and
\(\varepsilon>0\). Together with the uniform moment and increment
estimates established above and Lemma~\ref{le:imp esti}, this shows that all the
estimates in the proof of Theorem~\ref{th:tem vari} are uniform for
\(x\in[\varepsilon,1]\). Consequently, \eqref{eq:L1con} holds uniformly
on \([\varepsilon,1]\).

\begin{corollary}
	For \(n,m\geq1\), let
	\[
	h_m:=\frac{1-\varepsilon}{m},
	\qquad
	x_j:=\varepsilon+jh_m,
	\qquad j=0,1,\ldots,m,
	\]
	and define
	\[
	V_{n,m}^{\varepsilon}(u)
	:=
	h_m
	\sum_{j=0}^{m-1}
	\sum_{i=1}^{n}
	\left(
	u(t_i,x_j)-u(t_{i-1},x_j)
	\right)^4
	=
	h_m
	\sum_{j=0}^{m-1}V_{n,x_j}(u),
	\]
	where \(V_{n,x}(u)\) is defined in \eqref{eq:V} and
	\(t_i\) is given by \eqref{eq:t_i}. Then, as \(n,m\to\infty\),
	\begin{equation}\label{eq:Vnmright}
		V_{n,m}^{\varepsilon}(u)
		\longrightarrow
		\frac{6}{\pi a_2}
		\int_{\varepsilon}^{1}
		\int_{T_1}^{T_2}
		\sigma^4(u(t,y))\,dt\,dy
		\qquad\text{in }L^1(\Omega).
	\end{equation}
	Moreover,
	\begin{equation}
		\begin{aligned}
			&\mathbb E\left|
			V_{n,m}^{\varepsilon}(u)
			-
			\frac{6}{\pi a_2}
			\int_{\varepsilon}^{1}
			\int_{T_1}^{T_2}
			\sigma^4(u(t,y))\,dt\,dy
			\right|\\
			&\qquad\leq
			C_{T_1,T_2,\varepsilon}
			\left(
			n^{-\frac{3}{20}}
			+
			m^{-\frac{1}{2}}
			\right)
		\end{aligned}
	\end{equation}
	for \(n\) and \(m\) sufficiently large.
\end{corollary}

\begin{proof}
	For simplicity, set
	\[
	\Phi(y):=
	\frac{6}{\pi a_2}
	\int_{T_1}^{T_2}\sigma^4(u(t,y))\,dt.
	\]
	We decompose
	\begin{align*}
		&V_{n,m}^{\varepsilon}(u)
		-\int_{\varepsilon}^{1}\Phi(y)\,dy\\
		&=
		V_{n,m}^{\varepsilon}(u)
		-
		h_m\sum_{j=0}^{m-1}
		\frac{6}{\pi a_2}
		\int_{T_1}^{T_2}
		\sigma^4(u(t,x_j))\,dt\\
		&\quad+
		h_m\sum_{j=0}^{m-1}
		\frac{6}{\pi a_2}
		\int_{T_1}^{T_2}
		\sigma^4(u(t,x_j))\,dt
		-
		\int_{\varepsilon}^{1}\Phi(y)\,dy\\
		&=:D_1+D_2.
	\end{align*}
	
	Since the estimates in the proof of Theorem~\ref{th:tem vari} are
	uniform for \(x\in[\varepsilon,1]\), by \eqref{eq:L1con} and the
	triangle inequality, we obtain
	\begin{align*}
		\mathbb E|D_1|
		&\leq
		h_m\sum_{j=0}^{m-1}
		\mathbb E\left|
		V_{n,x_j}(u)
		-
		\frac{6}{\pi a_2}
		\int_{T_1}^{T_2}
		\sigma^4(u(t,x_j))\,dt
		\right|\\
		&\leq
		C_{T_1,T_2,\varepsilon}
		h_m m n^{-\frac{3}{20}}\\
		&\leq
		C_{T_1,T_2,\varepsilon}
		n^{-\frac{3}{20}}.
	\end{align*}
	
	For the second term, we have
	\[
	D_2
	=
	\sum_{j=0}^{m-1}
	\int_{x_j}^{x_{j+1}}
	\left(
	\Phi(x_j)-\Phi(y)
	\right)\,dy.
	\]
	Therefore,
	\begin{align*}
		\mathbb E|D_2|
		&\leq
		\frac{6}{\pi a_2}
		\sum_{j=0}^{m-1}
		\int_{x_j}^{x_{j+1}}
		\int_{T_1}^{T_2}
		\mathbb E\left|
		\sigma^4(u(t,x_j))
		-
		\sigma^4(u(t,y))
		\right|
		\,dt\,dy.
	\end{align*}
	
	By the Lipschitz continuity of \(\sigma\), the uniform moment bounds of
	the mild solution, and Lemma~\ref{le:helix-x}, we have
	\[
	\mathbb E\left|
	\sigma^4(u(t,x_j))
	-
	\sigma^4(u(t,y))
	\right|
	\leq
	C_{T_1,T_2}
	\left(
	\mathbb E|u(t,x_j)-u(t,y)|^4
	\right)^{1/4}
	\leq
	C_{T_1,T_2}
	|x_j-y|^{1/2}.
	\]
	Consequently,
	\begin{align*}
		\mathbb E|D_2|
		&\leq
		C_{T_1,T_2}
		\sum_{j=0}^{m-1}
		\int_{x_j}^{x_{j+1}}
		|x_j-y|^{1/2}\,dy\\
		&=
		C_{T_1,T_2}
		m\int_0^{h_m}y^{1/2}\,dy\\
		&\leq
		C_{T_1,T_2,\varepsilon}
		m^{-1/2}.
	\end{align*}
	
	Combining the estimates for \(D_1\) and \(D_2\), we obtain
	\[
	\mathbb E\left|
	V_{n,m}^{\varepsilon}(u)
	-
	\frac{6}{\pi a_2}
	\int_{\varepsilon}^{1}
	\int_{T_1}^{T_2}
	\sigma^4(u(t,y))\,dt\,dy
	\right|
	\leq
	C_{T_1,T_2,\varepsilon}
	\left(
	n^{-\frac{3}{20}}
	+
	m^{-\frac{1}{2}}
	\right),
	\]
	which completes the proof.
\end{proof}
\begin{remark}
	Assume that \(\sigma\) is known and additionally that
	\[
	\mathbb P\left(
	\int_{\varepsilon}^{1}\int_{T_1}^{T_2}
	\sigma^4(u(t,y))\,dt\,dy>0
	\right)=1.
	\]
	We define
	\[
	\widehat a_{2,n,m}^{\,\varepsilon}
	:=
	\frac{
		6h_m\delta
		\sum_{j=0}^{m-1}\sum_{i=1}^{n}
		\sigma^4(u(t_i,x_j))
	}{
		\pi V_{n,m}^{\varepsilon}(u)
	},
	\]
	whenever \(V_{n,m}^{\varepsilon}(u)>0\), and set
	\(\widehat a_{2,n,m}^{\,\varepsilon}=0\) otherwise. 
	
	Then,
	\[
	\widehat a_{2,n,m}^{\,\varepsilon}
	\xrightarrow{\mathbb P}a_2
	\qquad\text{as }n,m\to\infty.
	\]
	
	Indeed, the joint continuity of \(u\), the continuity of \(\sigma\),
	and the Riemann-sum theorem yield
	\[
	h_m\delta
	\sum_{j=0}^{m-1}\sum_{i=1}^{n}
	\sigma^4(u(t_i,x_j))
	\longrightarrow
	\int_{\varepsilon}^{1}\int_{T_1}^{T_2}
	\sigma^4(u(t,y))\,dt\,dy
	\]
	in probability. On the other hand, by \eqref{eq:Vnmright},
	\[
	V_{n,m}^{\varepsilon}(u)
	\longrightarrow
	\frac{6}{\pi a_2}
	\int_{\varepsilon}^{1}\int_{T_1}^{T_2}
	\sigma^4(u(t,y))\,dt\,dy
	\]
	in probability. Therefore, the positivity assumption and the continuous
	mapping theorem imply that
	\[
	\widehat a_{2,n,m}^{\,\varepsilon}
	\xrightarrow{\mathbb P}
	\frac{
		6\displaystyle\int_{\varepsilon}^{1}\int_{T_1}^{T_2}
		\sigma^4(u(t,y))\,dt\,dy
	}{
		\pi\displaystyle\frac{6}{\pi a_2}
		\int_{\varepsilon}^{1}\int_{T_1}^{T_2}
		\sigma^4(u(t,y))\,dt\,dy
	}
	=a_2.
	\]
\end{remark}
\appendix
\section{Proof of the technical lemma}\label{appendix}
\begin{proof}[Proof of Lemma \ref{le:G-G2}]
		To begin with, we write the left-hand side as
	\begin{eqnarray*}
		&&\int_0^s\int_{\mathbb{R}}\left(G(t+h-r,x,y)-G(t-r,x,y)\right)^2dy\,dr\\
		&=&\int_0^s\int_{-\infty}^0\left[\frac{1}{\sqrt{2\pi a_1(t+h-r)}}\left(E^{-}(t+h-r,x,y)-\beta E^{+}(t+h-r,x,y)\right)\right.\\
		&&{}-\left.\frac{1}{\sqrt{2\pi a_1(t-r)}}\left(E^{-}(t-r,x,y)-\beta E^{+}(t-r,x,y)\right)\right]^2dy\,dr\\
		&&{}+\int_0^s\int_{0}^{+\infty}\left[\frac{1}{\sqrt{2\pi a_2(t+h-r)}}\left(E^{-}(t+h-r,x,y)+\beta E^{+}(t+h-r,x,y)\right)\right.\\
		&&{}-\left.\frac{1}{\sqrt{2\pi a_2(t-r)}}\left(E^{-}(t-r,x,y)+\beta E^{+}(t-r,x,y)\right)\right]^2dy\,dr,
	\end{eqnarray*}
	with
	\[E^{-}(t,x,y)=\exp    \left(-\frac{(f(y)-f(x))^{2}}{2t}   \right)\]
	and
	\[E^{+}(t,x,y)=\exp   \left (-\frac{( \lvert f(y) \rvert + \lvert f(x) \rvert )^{2}}{2t}  \right ).\]
	We find that
	\begin{align*}
		&\max\left(\frac{1}{a_1}\left[\frac{E^{-}(t+h-r,x,y)}{\sqrt{ (t+h-r)}}-\frac{\beta E^{+}(t+h-r,x,y)}{\sqrt{ (t+h-r)}}-\frac{E^{-}(t-r,x,y)}{\sqrt{ (t-r)}}+\frac{\beta E^{+}(t-r,x,y)}{\sqrt{ (t-r)}}\right]^2\,\textbf{,}\right.\\
		& \left.\qquad\frac{1}{a_2}\left[\frac{E^{-}(t+h-r,x,y)}{\sqrt{ (t+h-r)}}+\frac{\beta E^{+}(t+h-r,x,y)}{\sqrt{ (t+h-r)}}-\frac{E^{-}(t-r,x,y)}{\sqrt{ (t-r)}}-\frac{\beta E^{+}(t-r,x,y)}{\sqrt{ (t-r)}}\right]^2\right)\\
		&=\max\left(\frac{1}{a_1}\left[\frac{E^{-}(t+h-r,x,y)}{\sqrt{ (t+h-r)}}-\frac{E^{-}(t-r,x,y)}{\sqrt{ (t-r)}}-\beta\left(\frac{ E^{+}(t+h-r,x,y)}{\sqrt{ (t+h-r)}}-\frac{ E^{+}(t-r,x,y)}{\sqrt{ (t-r)}}\right)\right]^2\,\textbf{,}\right.\\
		&   \left.\qquad\frac{1}{a_2}\left[\frac{E^{-}(t+h-r,x,y)}{\sqrt{ (t+h-r)}}-\frac{E^{-}(t-r,x,y)}{\sqrt{ (t-r)}}+\beta\left(\frac{ E^{+}(t+h-r,x,y)}{\sqrt{ (t+h-r)}}-\frac{ E^{+}(t-r,x,y)}{\sqrt{ (t-r)}}\right)\right]^2\right)\\
		&\leq  2\max\left(\frac{1}{a_1}, \frac{1}{a_2}\right)\left[\left(\frac{E^{-}(t+h-r,x,y)}{\sqrt{ (t+h-r)}}-\frac{E^{-}(t-r,x,y)}{\sqrt{ (t-r)}}\right)^2\right.\\
		&\qquad\qquad\qquad\qquad\qquad\qquad\left.+\,\beta^2\left(\frac{ E^{+}(t+h-r,x,y)}{\sqrt{ (t+h-r)}}-\frac{ E^{+}(t-r,x,y)}{\sqrt{ (t-r)}}\right)^2\right],
	\end{align*}
	since 
	\[\max\left(\frac{1}{a_1}(a-b)^2,\, \frac{1}{a_2}(a+b)^2\right)\,\leq\, 2\max \left(\frac{1}{a_1},\, \frac{1}{a_2}\right)\cdot\left(a^2\,+b^2\right)\]
	with basic calculus for every $a,b\in\mathbb{R}$.
	Hence we obtain
	\begin{eqnarray*}
		&&\int_0^s\int_{\mathbb{R}}\left(G(t+h-r,x,y)-G(t-r,x,y)\right)^2dy\,dr\\
		&\leq & \frac{1}{\pi} \max\left(\frac{1}{a_1}, \frac{1}{a_2}\right) \left[ \int_0^s\int_{\mathbb{R}}\left(\frac{E^{-}(t+h-r,x,y)}{\sqrt{ (t+h-r)}}-\frac{E^{-}(t-r,x,y)}{\sqrt{ (t-r)}}\right)^2\,dy\,dr\right.\\
		&&{}+   \left.\beta^2\int_0^s\int_{\mathbb{R}} \left(\frac{ E^{+}(t+h-r,x,y)}{\sqrt{ (t+h-r)}}-\frac{ E^{+}(t-r,x,y)}{\sqrt{ (t-r)}}\right)^2\,dy\,dr  \right].
	\end{eqnarray*} 
	It remains to estimate the two integral given in the right-hand side and we proceed with the proof in two steps.
	
	\textit{Step \I :}
	
	Rewrite the integral
	\[\int_0^s\int_{\mathbb{R}}\left(\frac{E^{-}(t+h-r,x,y)}{\sqrt{ (t+h-r)}}-\frac{E^{-}(t-r,x,y)}{\sqrt{ (t-r)}}\right)^2\,dy\,dr\]
	we get
	\begin{eqnarray*}
		&&\int_0^s\int_{\mathbb{R}}\left(\frac{1}{\sqrt{ (t+h-r)}}\exp \left(-\frac{(f(y)-f(x))^{2}}{2(t+h-r)}\right) \right.\\
		&&{}-\left.\frac{1}{\sqrt{ (t-r)}}\exp \left(-\frac{(f(y)-f(x))^{2}}{2(t-r)}\right)\right)^2\,dy\,dr\\
		&=& \int_0^s\int_{0}^{+\infty}\left(\frac{1}{\sqrt{ (t+h-r)}}\exp \left(-\frac{(y/\sqrt{a_2}-f(x))^{2}}{2(t+h-r)}\right) \right.\\
		&&{}\left.-\frac{1}{\sqrt{ (t-r)}}\exp \left(-\frac{(y/\sqrt{a_2}-f(x))^{2}}{2(t-r)}\right)\right)^2\,dy\,dr\\
		&&{}+\int_0^s\int_{-\infty}^0\left(\frac{1}{\sqrt{ (t+h-r)}}\exp \left(-\frac{(y/\sqrt{a_1}-f(x))^{2}}{2(t+h-r)}\right) \right.\\
		&&{}\left.-\frac{1}{\sqrt{ (t-r)}}\exp \left(-\frac{(y/\sqrt{a_1}-f(x))^{2}}{2(t-r)}\right)\right)^2\,dy\,dr.
	\end{eqnarray*}
	
	By a change of variables $y/\sqrt{a_2}-f(x)=y'$ in the first integral and $y/\sqrt{a_1}-f(x)=y'$ in the second integral, it follows that
	\begin{align*}
		\MoveEqLeft[1.5]\int_0^s\int_{\mathbb{R}}\left(\frac{E^{-}(t+h-r,x,y)}{\sqrt{ (t+h-r)}}-\frac{E^{-}(t-r,x,y)}{\sqrt{ (t-r)}}\right)^2\,dy\,dr\\
		&=\sqrt{a_2}\int_0^s\int_{-f(x)}^{+\infty}\left(\frac{1}{\sqrt{ (t+h-r)}}\exp \left(-\frac{y^{2}}{2(t+h-r)}\right) -\frac{1}{\sqrt{ (t-r)}}\exp \left(-\frac{y^{2}}{2(t-r)}\right)\right)^2\,dy\,dr\\
		&+\sqrt{a_1}\int_0^s\int_{-\infty}^{-f(x)}\left(\frac{1}{\sqrt{ (t+h-r)}}\exp \left(-\frac{y^{2}}{2(t+h-r)}\right) -\frac{1}{\sqrt{ (t-r)}}\exp \left(-\frac{y^{2}}{2(t-r)}\right)\right)^2\,dy\,dr\\
		&\leq 2\pi \max(\sqrt{a_1}, \sqrt{a_2})\int_0^s\int_{\mathbb{R}}\left(\frac{1}{\sqrt{ 2\pi(t+h-r)}}\exp \left(-\frac{y^{2}}{2(t+h-r)}\right) \right.\\
		&\qquad\qquad\qquad\qquad\qquad\qquad\left.-\frac{1}{\sqrt{2\pi (t-r)}}\exp \left(-\frac{y^{2}}{2(t-r)}\right)\right)^2\,dy\,dr\\
		&\,:\,=\,2\pi \max(\sqrt{a_1}, \sqrt{a_2})\int_0^s\int_{\mathbb{R}}(p_{t+h-r}(y)-p_{t-r}(y))^2\,dy\,dr,
	\end{align*}
	where $p_t(x)$ denotes the standard heat kernel defined by
	\[
	p_t(x) = \frac{1}{\sqrt{2\pi t}}
	\exp\!\left( -\,\frac{x^{2}}{2t} \right),
	\]
	for every \(t>0\) and \(x\in\mathbb{R}\).
	
	Using the Parseval--Plancherel identity
	\[
	\langle f, g \rangle_{L^{2}(\mathbb{R})}
	= \frac{1}{2\pi} \int_{\mathbb{R}}
	(\mathcal{F}f)(\xi)\, \overline{(\mathcal{F}g)(\xi)}\, d\xi,
	\]
	for any \( f\,,\, g \in L^{2}(\mathbb{R}) \),	and the fact that
	\[
	\mathcal{F}p_t(\cdot)(\xi)=e^{-t\xi^{2}/2},
	\qquad \xi\in\mathbb{R},\ t>0,
	\]
	we derive that
	\begin{eqnarray*}
		&&\int_0^s\int_{\mathbb{R}}\left(\frac{E^{-}(t+h-r,x,y)}{\sqrt{ (t+h-r)}}-\frac{E^{-}(t-r,x,y)}{\sqrt{ (t-r)}}\right)^2\,dy\,dr\\
		&\leq& \max(\sqrt{a_1}, \sqrt{a_2})\int_0^s\int_{\mathbb{R}}\left(e^{-\frac{(t+h-r)\xi^2}{2}}-e^{-\frac{(t-r)\xi^2}{2}}\right)^2\,d\xi\,dr\\
		&=&\max(\sqrt{a_1}, \sqrt{a_2})\int_0^s\int_{\mathbb{R}}e^{-(t-r)\xi^2}\left(1-e^{-\frac{h\xi^2}{2}}\right)^2\,d\xi\,dr\\
		&\leq &\max(\sqrt{a_1}, \sqrt{a_2})\int_{\mathbb{R}}\frac{\left(1-e^{-h\xi^2/2}\right)^2}{\xi^2}\,e^{-(t-s)\xi^2}\left(1-e^{-s\xi^2}\right)\,d\xi.
	\end{eqnarray*}
	Moreover, it is clear that $1-e^{-x}\leq x\wedge 1$ for $x>0$, so we obtain that
	\begin{multline*}
		\int_0^s\int_{\mathbb{R}}\left(\frac{E^{-}(t+h-r,x,y)}{\sqrt{ (t+h-r)}}-\frac{E^{-}(t-r,x,y)}{\sqrt{ (t-r)}}\right)^2\,dy\,dr\\
		\leq2\max(\sqrt{a_1}, \sqrt{a_2})\int_0^\infty\frac{h^2\xi^2}{4}\,e^{-(t-s)\xi^2}\,d\xi\leq \frac{1}{2}\max(\sqrt{a_1}, \sqrt{a_2})\Gamma\left(\frac{3}{2}\right)\,(t-s)^{-\frac{3}{2}}\,h^2,
	\end{multline*}
	with the Gamma function $\Gamma(z)=\int_0^\infty e^{-t}\, t^{z-1}dt\,,\, z>0.$
	
	\textit{Step \II: }
	
	Let us deal with the second integral of the form
	\[\int_0^s\int_{\mathbb{R}} \left(\frac{ E^{+}(t+h-r,x,y)}{\sqrt{ (t+h-r)}}-\frac{ E^{+}(t-r,x,y)}{\sqrt{ (t-r)}}\right)^2\,dy\,dr.\]
	Indeed, we split the integral over $y$ into the positive and negative parts as 
	\begin{multline*}
		\int_0^s\int_{0}^{+\infty}\left(\frac{1}{\sqrt{t+h-r}}\exp \left(-\frac{(\vert f(x)\vert+y/\sqrt{a_2})^2}{2(t+h-r)}\right)-\frac{1}{\sqrt{t-r}}\exp\left(-\frac{(|f(x)|+y/\sqrt{a_2})^2}{2(t-r)}\right)\right)^2\,dy\,dr\\
		+\int_0^s\int_{-\infty}^0\left(\frac{1}{\sqrt{t+h-r}}\exp \left(-\frac{(|f(x)|-y/\sqrt{a_1})^2}{2(t+h-r)}\right)-\frac{1}{\sqrt{t-r}}\exp\left(-\frac{(| f(x)|-y/\sqrt{a_1})^2}{2(t-r)}\right)\right)^2\,dy\,dr.
	\end{multline*}
	Then, by a change of variables $\vert f(x)\vert+y/\sqrt{a_2}=y'$ in the first integral and $\vert f(x)\vert-y/\sqrt{a_1}=y'$ in the second integral, we have
	\begin{multline*}
		\int_0^s\int_{\mathbb{R}} \left(\frac{ E^{+}(t+h-r,x,y)}{\sqrt{ (t+h-r)}}-\frac{ E^{+}(t-r,x,y)}{\sqrt{ (t-r)}}\right)^2\,dy\,dr\\
		\leq \max(\sqrt{a_1}, \sqrt{a_2}) \int_0^s\int_{\mathbb{R}}\left(\frac{1}{\sqrt{(t+h-r)}}\exp \left(-\frac{y^{2}}{2(t+h-r)}\right)-\frac{1}{\sqrt{(t-r)}}\exp \left(-\frac{y^{2}}{2(t-r)}\right)\right)^2\,dy\,dr.
	\end{multline*}
	
	The same Fourier estimate applies to this term, completing the proof.
\end{proof}

\bibliographystyle{plain}
\bibliography{Li_exact_variation}
\end{document}